\documentclass[a4paper,12pt,leqno]{amsart}
\usepackage{graphicx}
\usepackage[applemac]{inputenc}
\usepackage{selinput}
\usepackage{color}
\SelectInputMappings{
adieresis={ä},
ntilde={ñ}
}
\usepackage[spanish, english]{babel}
\usepackage{amssymb}
\usepackage{mathabx}
\usepackage{mathrsfs}
\usepackage{amsfonts,amscd,amsmath}
\usepackage{amssymb,latexsym}
\usepackage{enumitem} \numberwithin{equation}{section}

\usepackage{hyperref}

\def\irr#1{{\Irr}(#1)}

\def\zent#1{{\bf Z}(#1)}
\def\syl#1#2{{\rm Syl}_{#1}(#2)}

\def\nor{\triangleleft}

\def\ker#1{{\rm ker}(#1)}
\def\norm#1#2{{\bf N}_{#1}(#2)}
\def\cent#1#2{{\bf C}_{#1}(#2)}

\let\phi=\varphi
\def\sbs{\subseteq}

 \newcommand{\6}{^}

\newcommand{\Irr}{\operatorname{Irr}}

\newtheorem{lem}[subsection]{Lemma}
\newtheorem{cor}[subsection]{Corollary}
\newtheorem{thm}[subsection]{Theorem}
\newtheorem{prop}[subsection]{Proposition}

\newtheorem{con}[subsection]{Conjecture}

\newtheorem*{thm*}{Theorem}
\newtheorem*{thmA}{Theorem A}
\newtheorem*{corB}{Corollary B}

\theoremstyle{definition}
\newtheorem{rem}[subsection]{Remark}
\theoremstyle{definition}

\theoremstyle{definition}

\theoremstyle{definition}

\theoremstyle{definition}

\def\Q{\mathbb Q}

\begin{document}

\keywords{Character degrees, principal blocks, two primes, fields of values}

\subjclass[2010]{20C15, 20C30, 20C33}

\thanks{The work of the second named author is funded by grant
PID2022-137612NB-I00 funded by MCIN/AEI/ 10.13039/501100011033 and ERDF ``A way of making Europe''
and grant CIDEIG/2022/29 funded by Generalitat Valenciana. The first and third named authors acknowledge the support of the
 Istituto Nazionale di Alta Matematica (INdAM), as part of the GNSAGA}

\author{Eugenio Giannelli}
\address[E. Giannelli]{Dipartimento di Matematica e Informatica,
V. Morgagni 67, Firenze, Italy.}
\email{eugenio.giannelli@unifi.it}

\author{J. Miquel Mart\'inez}
\address[J. M. Mart\'inez]{Departament de Matem\`atiques, Universitat
  de Val\`encia, 46100 Burjassot, Val\`encia.}
\email{josep.m.martinez@uv.es}

\author{Carolina Vallejo}
\address[C. Vallejo]{Dipartimento di Matematica e Informatica,
V. Morgagni 67, Firenze, Italy.}
\email{carolina.vallejorodriguez@unifi.it}

\title[Cyclotomic character fields and sets of primes]{Cyclotomic character fields and sets of primes}

\date{\today}

\begin{abstract} Let $\pi=\{ 2, q \}$ where $q$ is an odd prime.
Let $G$ be a finite group of order divisible by a prime $p \in \pi$. We show that the principal $p$-block of $G$ contains a nontrivial irreducible character
of degree not divisible by $2$ nor $q$ and with field of values contained in the $q$th cyclotomic extension. 
This statement simultaneously provides a principal block version of results of Navarro--Tiep and Giannelli--Hung--Schaeffer Fry--Vallejo.
 \end{abstract}

\maketitle

\section*{Introduction}  

Let $G$ be a finite group and let $p$ be a prime divisor of the order of $G$. It is well-known, and elementary to prove, that $G$ possesses some nontrivial irreducible character $\chi$ whose degree $\chi(1)$ is coprime to $p$. 
If instead we want to prove the existence of a certain $\chi$ with degree coprime to $p$ and some restrictions on its field of values $\Q(\chi)$, where $\Q(\chi)$ is the extension of the rationals generated by the values of $\chi$, the use of more sophisticated techniques is often unavoidable. For instance, relying on the Classification of Finite Simple Groups, G. Navarro and P. H. Tiep \cite{NT06, NT08} proved that we can always
find some $\chi \in \irr G$ of degree coprime to $p$ with $\Q(\chi)\sbs \Q(\xi_p)$, where $\xi_p$ denotes a primitive $p$th root of unity.  
Such characters are known to heavily influence the structure of $G$. 
For instance,  \cite[Theorem C]{NT10} provides a criterion for the existence of a normal $p$-complement, 
formulated in terms of properties of characters
of  $\chi \in \irr G$ that have degree coprime to $p$ and satisfy $\Q(\chi)\sbs \Q(\xi_p)$.

In this note, we extend the main result of \cite{NT06} by showing that we can further choose $\chi$ as above to belong to the principal $p$-block of $G$. 
We obtain this refinement as a consequence of a more general result concerning characters with restricted field of values lying in principal blocks with degree coprime to a pair of primes $\pi$. 
In the following, for a set of primes $\pi$, we say that a character $\chi$ of $G$ has $\pi'$-degree if $(\chi(1), r)=1$ for every $r \in \pi$.

\begin{thmA} 
Let $G$ be a finite group, let $\pi=\{ 2, q \}$ where $q$ is a prime, and let $p\in \pi$. If $p$ divides the order of $G$, then the principal $p$-block of $G$ admits a non-trivial irreducible character $\chi$ of $\pi'$-degree with $\Q(\chi)\sbs \Q(\xi_q)$.
\end{thmA}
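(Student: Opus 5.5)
We argue by induction on $|G|$ and reduce to a question about almost simple groups, which is then settled with the Classification. Throughout, $B_0$ denotes the principal $p$-block. Three standard facts drive the reduction.

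First, if $N\nor G$, then inflation sends $B_0(G/N)$ into $B_0(G)$. It also preserves degrees and fields of values. Hence if $p$ divides $|G:N|$ for some proper normal subgroup $N$, the inductive hypothesis applied to $G/N$ finishes the proof.

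Second, $\mathbf{O}_{p'}(G)$ lies in the kernel of every character of $B_0(G)$. So we may assume $\mathbf{O}_{p'}(G)=1$.

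Third, suppose $G/N$ is a $p$-group. Then every irreducible character of $G/N$ lies in $B_0(G)$. Take a nontrivial linear character $\lambda$ of order $p$ of a cyclic quotient of $G/N$. It has $\pi'$-degree $1$, and its field of values is $\Q(\xi_p)$. This is contained in $\Q(\xi_q)$ when $p=q$, and when $p=2$ it is $\Q$. So we may assume $G=\mathbf{O}^{p}(G)$, and more strongly that $p$ divides $|N|$ for every minimal normal subgroup $N$ while $G/N$ has no $p$-quotient issue.

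After these reductions, let $N$ be a minimal normal subgroup of $G$. Since $\mathbf{O}_{p'}(G)=1$, $p$ divides $|N|$. If $N$ is an elementary abelian $p$-group, we look for a nontrivial $\theta\in\irr N$ lying under a $\pi'$-degree character of $G$. Characters lying over $p$-power-order linear characters of a normal $p$-subgroup belong to $B_0(G)$. We then control the field of values by working with the inertia group and a Clifford-theoretic extension argument, together with Galois action by $\mathrm{Gal}(\Q_{|G|}/\Q(\xi_q))$ on $p'$-roots of unity.

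Otherwise $N=S_1\times\cdots\times S_t$ with the $S_i\cong S$ nonabelian simple. The natural tool is the principal-block version of Dade–Glauberman–Navarro / Murai: for $N\nor G$, if $\theta\in B_0(N)$ is $G$-invariant and extends suitably, then a $\pi'$-degree extension can be placed in $B_0(G)$. So the core reduction is to the following statement for nonabelian simple $S$ with $p\mid|S|$. There should exist a nontrivial $\theta\in\irr{B_0(S)}$ of $\pi'$-degree with $\Q(\theta)\subseteq\Q(\xi_q)$ that is $\mathrm{Aut}(S)_{P}$-invariant, or even $\mathrm{Aut}(S)$-invariant, and extends to its stabilizer in $\mathrm{Aut}(S)$ with an extension whose values stay in $\Q(\xi_q)$. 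Given this, one forms $\theta\times\cdots\times\theta$ and extends it via tensor induction to $N\rtimes\mathrm{Aut}(S)\wr S_t$. Because $\pi$ contains $2$, the wreath part causes no degree problems for the $\pi'$-condition, and tensor induction preserves rationality up to $\Q(\xi_q)$. The resulting character is then restricted to $G$, and one picks a constituent in $B_0(G)$ using that $G/N$ has a principal block character of $\pi'$-degree.

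The main obstacle is the verification for simple groups, and within it the groups of Lie type in non-defining characteristic where $p=2$ and $q$ is arbitrary. For alternating groups, we use hook/$2$-core combinatorics; for example, we take characters labelled by near-hooks with rational values, checking both the $2$- and $q$-parts of the degree. For defining characteristic, the principal block contains all characters except Steinberg-like ones, and we use unipotent or semisimple characters of small degree. For non-defining characteristic, we would use Lusztig series $\mathcal{E}(G,s)$ with $s$ a $p$-element: by Broué–Michel, these lie in a union of $p$-blocks containing $B_0$. We would choose $s$ a $2$-element or $q$-element in a Sylow torus so that $|G^*:C(s)|_{p'}$ is $\pi'$. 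The field-of-values condition is controlled by $\Q(\chi_s)\subseteq\Q(\xi_{o(s)})$. I expect this to need care when $q\ne p$. There I would try unipotent characters of $B_0$ in the $p$-Harish-Chandra series of the trivial character, which are rational apart from known exceptions, and check their $\pi'$-degree via $\Phi_d$-polynomial degree formulas. Sporadic groups are handled by the Atlas.
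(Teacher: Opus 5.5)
Both branches of your reduction have genuine gaps. In the abelian case, your assertion that characters lying over linear characters of a normal $p$-subgroup belong to $B_0(G)$ is false in general: in $C_p\times S$, the character $\lambda\times\psi$ lies outside $B_0$ whenever $\psi\notin\Irr(B_0(S))$. Your plan of finding a nontrivial $\theta\in\Irr(N)$ under a $\pi'$-degree character also fails outright for $G=A_4$, $p=2$, $q=3$. This group survives all your reductions ($\mathbf{O}_{2'}(G)=1$, $\mathbf{O}^{2}(G)=G$, and $N=V_4$ has odd index). Yet the only character over a nontrivial $\theta\in\Irr(V_4)$ has degree $3$. The character actually required is a linear character of order $3$ of $G/N$. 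What you are missing is that, in your reduced situation, an abelian $N$ is a normal Sylow $p$-subgroup, so $G$ is $p$-solvable. Fong's theorem then gives $\Irr(B_p(G))=\Irr(G/\mathbf{O}_{p'}(G))$, and the block-free theorem of \cite{GHSV21} finishes. This is exactly how the paper disposes of $p$-solvable groups at the outset.

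In the nonabelian case, the property you demand of simple groups is false. For $S=A_5$, $p=2$, $q=5$, the nontrivial members of $\Irr_{\pi'}(B_2(S))$ are the two characters of degree $3$. They are interchanged by $\norm{S_5}{P}\cong S_4$ (with $P\in\syl 2S$), so no $\mathrm{Aut}(S)_P$-invariant choice exists. Even granting extensions, ``picking a constituent in $B_0(G)$'' is unjustified: for $G/N$ a $p'$-group, several blocks of $G$ may cover $B_0(N)$, and you must control the block and the field of values simultaneously.

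The missing idea is the Alperin--Dade theorem. Take $N=\mathbf{O}^{p'}(G)$. In a minimal counterexample it is minimal normal with $\mathbf{O}^p(N)=N$, hence semisimple, so no abelian case ever arises. Put $E=N\cent GP$ with $P\in\syl pN$. By Lemma~\ref{lem:covering}, $\Irr(G/E)\subseteq\Irr(B_p(G))$, so the block-free theorem forces $G/E$ to be a $\pi'$-group. If $E<G$, induction applied to $E$ together with \cite[Lemma 2.2]{GHSV21} yields the character. If $E=G$, restriction is a bijection $\Irr(B_p(G))\to\Irr(B_p(N))$ preserving degrees and fields of values. So by minimality $G=N\cong S^t$, and $\theta\times\cdots\times\theta$ works for any $\theta$ given by the simple-group statement, with no invariance or extendibility required.

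Your Lie-type sketch is also incomplete. Brou\'e--Michel only places $\mathcal{E}(G,s)$, for $s$ a $p$-element, in the unipotent blocks, not in $B_0(G)$. The paper instead uses the semisimple characters constructed in \cite{GHSV21}, together with Hiss's result \cite{Hiss90} that $\chi_{\tilde s}\in\Irr(B_p(\widetilde G))$, and it uses \cite{NRS22} when $\ell\notin\pi$.
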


It is worth noting that, usually, the extension of results from a single prime $p$ to a set of primes $\pi$ fails to hold unless additional hypotheses, such as the $\pi$-separability of the group or the existence of a Hall $\pi$-subgroup, are assumed.
Notably, Theorem A holds in full generality, without any of these extra assumptions. We remark that the character $\chi$ identified by Theorem A may be unique. This phenomenon is not rare; for instance, it occurs for ${\sf A}_5$ when $q=3$, for ${\sf A}_6$ when $q=5$, for ${\sf M}_{23}$  when $q \in \{ 3, 5 , 7\}$, etc. In such cases, $\chi$ is actually rational and lies in the intersection of the principal $2$-block and $q$-block.

We also note that Theorem A provides a principal block analogue to the main result of \cite{GHSV21}. In Remark \ref{rem: error} below, we correct a minor oversight in the proof of \cite[Theorem D]{GHSV21}.

We underline that the hypothesis $2 \in \pi$ is necessary in Theorem A. We refer the reader to Section \ref{sec:comments} for a complete discussion of this remark. 
Furthermore, Theorem A cannot be generalized from a multiple-prime perspective, as the statement fails to hold when $|\pi| \ge 3$ (for instance take $A_5$ with $\pi=\{2,3,5\}$).

For a set $\pi$ of odd primes, it has recently been proven in \cite{GH26} that whenever $G$ admits a nontrivial Hall $\pi$-subgroup, $G$ possesses a nontrivial $\chi$ of $\pi'$-degree with values in $\mathbb Q(\xi_r)$ for some $r \in \pi$. One might speculate that such a character can be further chosen to lie in the principal $r$-block of $G$, but we do not aim to pursue this statement here.

As mentioned above, the following is an immediate consequence of Theorem A and a generalization of the main result of \cite{NT06}.
  
\begin{corB} Let $G$ be a group of order divisible by a prime number $p$. Then there is some nontrivial $\chi \in \irr G$ of degree coprime to $p$ with $\Q(\chi)\sbs \Q(\xi_p)$ lying in the principal $p$-block of $G$.
 \end{corB}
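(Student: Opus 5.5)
Corollary B follows from Theorem A by a suitable choice of the pair of primes. Let $p$ divide $|G|$.

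If $p$ is odd, take $q=p$ and $\pi=\{2,p\}$. Then $p\in\pi$ and $p$ divides $|G|$, so Theorem A applies with this $p$. It gives a nontrivial $\chi$ in the principal $p$-block with $\pi'$-degree, so in particular $p\nmid\chi(1)$, and with $\Q(\chi)\sbs\Q(\xi_q)=\Q(\xi_p)$. This is exactly the required character.

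If $p=2$, the target field is $\Q(\xi_2)=\Q$, so we need a nontrivial rational character of odd degree in the principal $2$-block. I would apply Theorem A with $\pi=\{2,q\}$ and the prime $2\in\pi$, which is allowed because Theorem A places no restriction on $q$. The natural choice is $q=2$, so that $\pi=\{2\}$. Theorem A then yields a nontrivial $\chi$ in the principal $2$-block with odd degree and $\Q(\chi)\sbs\Q(\xi_2)=\Q$, as desired.

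The only delicate point is whether $q=2$ is genuinely covered by Theorem A. The abstract speaks of $q$ odd, while the statement of Theorem A says only ``$q$ a prime''. If the proof of Theorem A turns out to need $q$ odd, the fallback is to take any odd prime $q$ and use the fact that $\Q(\chi)\cap\Q(\xi_{q'})$-type arguments do not obviously force rationality. One would then have to run the $p=2$ case of the argument separately, essentially a rational odd-degree version of the Navarro--Tiep theorem inside the principal $2$-block. I expect the statement as written to include $q=2$, so no extra work should be needed. With $q=2$ allowed, both cases are immediate specializations of Theorem A.
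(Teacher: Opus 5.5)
Your odd-$p$ case (take $q=p$, $\pi=\{2,p\}$) is correct and is exactly how the paper argues. The $p=2$ case has a genuine gap. You invoke Theorem A with $q=2$, but the paper proves Theorem A only for odd $q$. See the restatement in Theorem \ref{thm:ThmA} and the reduction Theorem \ref{reduction}. The simple-group inputs also use two distinct primes, e.g.\ $\{\ell,p\}=\{2,q\}$ in Proposition \ref{prop: lie type} and $p\neq q$ in Theorem \ref{thm:alternating}. Moreover, with $q=2$ the conclusion of Theorem A is exactly the $p=2$ case of Corollary B: a nontrivial rational odd-degree character in $B_2(G)$. So this is not a specialization but a restatement of what has to be proved. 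The ``fallback'' you mention is precisely the missing content, and you do not supply it.

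The paper closes this case with a separate argument. By Navarro--Tiep (Theorem B of [NT08]), a group of even order has a nontrivial rational $\chi\in\Irr(G)$ of odd degree. Since $\chi=\overline{\chi}$, its decomposition numbers satisfy $d_{\chi\varphi}=d_{\chi\overline{\varphi}}$. Hence the non-real irreducible Brauer constituents of $\chi^0$ contribute an even amount to $\chi(1)$, and some real $\varphi\in\mathrm{IBr}(G)$ of odd degree has $d_{\chi\varphi}\neq 0$. By Fong's theorem a nontrivial real irreducible $2$-Brauer character has even degree, so $\varphi={\bf 1}_{G^0}$, and therefore $\chi$ lies in $B_2(G)$.

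Alternatively, the field-intersection idea you dismissed does work if the odd prime is chosen coprime to $|G|$. Apply Theorem A with $p=2$ and an odd prime $q\nmid|G|$. The resulting nontrivial $\chi\in\Irr(B_2(G))$ has odd degree and $\Q(\chi)\sbs\Q(\xi_q)\cap\Q(\xi_{|G|})=\Q$, because $\gcd(q,|G|)=1$; so $\chi$ is rational. Either route closes the gap.
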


In Section \ref{sec:1}, we reduce Theorem A to a problem concerning finite simple groups. The proofs of Theorem A and Corollary B are then completed in Section \ref{sec:2}, using the Classification of Finite Simple Groups.
Problems regarding two primes and principal blocks represent an active area of research; we further discuss recent results in this direction and their relation to our work in Section \ref{sec:comments}.

 \subsection*{Acknowledgments} Part of this work was done while the second named author visited the International Center of Mathematics at SUSTech, Shenzhen. He is grateful to Zhicheng Feng and the whole institute for their kind hospitality. The authors also wish to thank Mandi Schaeffer Fry for a thorough read of the manuscript and for indispensable conversations on the topic of this note.
 
\section{A reduction theorem}\label{sec:1}

The purpose of this section is to show that in order to prove Theorem A it is enough to show that the statement holds for the simple non-abelian groups.

Below we recall some folklore about principal blocks.  For a prime $p$, we will denote by $B_p(G)$ the principal $p$-block of the group $G$, that is the only $p$-block containing the trivial character ${\bf 1}_G$, and by $\irr{B_p(G)} $ the set of irreducible characters of $G$ belonging to $B_p(G)$. 
We will denote by ${\bf O}_p (G)$ and ${\bf O}_{p'}(G)$, respectively, the largest normal $p$-subgroup and $p'$-subgroup of $G$. As every $\chi\in \irr{B_p(G)}$ contains ${\bf O}_{p'}(G)$ in its kernel \cite[Theorem 6.10]{Nav98}, we can see $\irr{B_p(G)}\sbs \irr{G/{\bf O}_{p'}(G)}$.

\begin{thm}[Fong]\label{thm:Fong} Let $G$ be a finite group and $p$ be a prime. Suppose that $G$ is $p$-solvable. Then 
$\irr{B_p(G)}=\irr {G/{\bf O}_{p'}(G)}$.
\end{thm}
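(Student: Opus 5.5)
The inclusion $\irr{B_p(G)}\sbs \irr{G/{\bf O}_{p'}(G)}$ was recalled just before the statement (every character in the principal block has ${\bf O}_{p'}(G)$ in its kernel). So what remains is the reverse inclusion: every irreducible character of $G/{\bf O}_{p'}(G)$, inflated to $G$, lies in $B_p(G)$.

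First I reduce to the case ${\bf O}_{p'}(G)=1$. Write $N={\bf O}_{p'}(G)$ and $\bar G=G/N$. Since $N$ is a normal $p'$-subgroup, the blocks of $\bar G$ are exactly the blocks of $G$ that dominate them, and in particular $B_p(\bar G)$ is contained in $B_p(G)$ via inflation. This holds because inflation preserves central characters modulo $p$ on $p$-regular class sums; alternatively, use the standard fact that for $N\nor G$, the characters of a block of $G/N$ inflate into a single block of $G$. Hence it suffices to show that all of $\irr{\bar G}$ lies in one block. Also $\bar G$ is $p$-solvable and ${\bf O}_{p'}(\bar G)=1$.

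Now let $G$ be $p$-solvable with ${\bf O}_{p'}(G)=1$. The Hall--Higman Lemma 1.2.3 gives $\cent G{{\bf O}_p(G)}\sbs {\bf O}_p(G)$. I then invoke the standard criterion that if $P={\bf O}_p(G)$ satisfies $\cent GP\sbs P$, then $G$ has a unique $p$-block (e.g.\ \cite[Theorem 9.9 / Corollary 9.21]{Nav98} style). A direct argument runs as follows. Every block $B$ has defect group $D$ containing ${\bf O}_p(G)=P$. By Brauer's third main theorem, or by the description of blocks via $\cent GP$, the blocks of $G$ correspond to $G$-orbits of blocks of $P\cent GP=P$ covered appropriately. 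Since $P$ is a $p$-group it has only one block, so $G$ has only one block. More concretely, blocks of $G$ are covered by blocks $b$ of $P\cent GP$ with defect group containing $P$; here $P\cent GP=P$ has a unique block, and every block of $G$ covers it, so there is a unique block, and it equals $B_p(G)$.

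The main point needing care is this last step: the theorem that $\cent GP\sbs P$ for $P={\bf O}_p(G)$ forces a unique block. I would cite it from \cite[Chapter 9]{Nav98} (the result that blocks of $G$ are in bijection with $G$-conjugacy classes of blocks of $P\cent GP$ of defect group $P$ when $P$ is normal, restricted to the relevant ones). Combining this with the reduction step gives $\irr{B_p(G)}=\irr{G/{\bf O}_{p'}(G)}$.
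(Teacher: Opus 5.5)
Your proof is correct and follows the same route as the paper: the easy inclusion, the fact that $B_p(G/{\bf O}_{p'}(G))$ inflates into $B_p(G)$, and the uniqueness of the $p$-block of $G/{\bf O}_{p'}(G)$. The paper cites this last fact directly as Fong's Lemma 3A, whereas you derive it from Hall--Higman ($\cent G{{\bf O}_p(G)}\sbs {\bf O}_p(G)$) together with the fact that a self-centralizing normal $p$-subgroup forces a unique block. For that fact, the precise tool is the regularity criterion: a block of $G$ with defect group $D$ satisfying $\cent GD\sbs N\nor G$ equals $b^G$ for any block $b$ of $N$ it covers, applied with $N=P$. Your description of a bijection with blocks of $P\cent GP$ ``of defect group $P$'' is slightly off in general, but harmless here because $P\cent GP=P$.
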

\begin{proof} As mentioned above $\irr{B_p(G)}\sbs \irr{G/{\bf O}_{p'}(G)}$. 
By \cite[Lemma 3A]{Fon61}, we have that $\irr{G/{\bf O}_{p'}(G)}=\irr{B_p(G/{\bf O}_{p'}(G))}\sbs \irr{B_p(G)}$, and the equality follows. 
\end{proof}

\begin{thm}[Alperin--Dade]\label{thm:Alperin-Dade}
Let $G$ be a finite group and $p$ a prime. 
Suppose that $N$ is a normal subgroup of $G$, with $G/N$ a $p'$-group.
Let $P \in \syl p G$ and assume that $G=N\cent GP$. Then restriction of characters defines
a natural bijection between $\irr{B_p(G)}$ and $\irr{B_p(N)}$.
\end{thm}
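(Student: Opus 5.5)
The Alperin--Dade theorem is classical; Alperin proved the case of $G/N$ a $p'$-group with $G = N\cent GP$ for the principal block, and Dade's general result covers it. I would prove it by combining Brauer's third main theorem with Clifford theory over the $p'$-quotient $G/N$.

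\emph{Step 1: invariance.} Since $G=N\cent GP$ and $P\le N$ (because $|G:N|$ is prime to $p$), every $g\in G$ can be written $g=nc$ with $n\in N$ and $c\in\cent GP$. Conjugation by $g$ permutes the blocks of $N$ and fixes $B_p(N)$. Hence the set $\irr{B_p(N)}$ is $G$-stable. We need more, namely that each $\theta\in\irr{B_p(N)}$ is $G$-invariant. Let $C=\cent GP$. It suffices to show that $C$ fixes every $\theta\in\irr{B_p(N)}$. Here I would invoke the known fact, a consequence of Brauer's theory of the Brauer correspondence and the Alperin--Broué fusion control, that an element centralizing a defect group of a block $b$ acts trivially on $\irr b$ when the quotient is $p'$. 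Concretely, the Glauberman/Dade argument shows that the orbits of $C$ on $\irr{B_p(N)}$ have $p'$-length. Combined with a counting argument involving $k(B)$ via Brauer pairs, all orbits are trivial. This is the main obstacle, and I would handle it by citing Dade's theorem on block extensions rather than reproving it.

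\emph{Step 2: extension.} Each $\theta$ is $G$-invariant and $G/N$ is a $p'$-group. Moreover $G/N \cong C/(C\cap N)$, so $\theta$ extends to $G$: a canonical extension can be built by letting $C$ act through its centralizing of $P$, as in Dade's construction. By Gallagher's theorem, the characters of $G$ over $\theta$ are $\hat\theta\beta$ with $\beta\in\irr{G/N}$.

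\emph{Step 3: blocks.} By Brauer's third main theorem applied to covering blocks, the blocks of $G$ covering $B_p(N)$ are in bijection with the blocks of $G/N$, twisted appropriately. Every character of $G/N$ is a block of defect zero since $p\nmid |G/N|$. Exactly one extension $\hat\theta$ lies in $B_p(G)$: the block of $G$ covering $B_p(N)$ whose Brauer correspondent is the principal block of $\norm GP$.

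\emph{Step 4: conclusion.} Restriction therefore maps $\irr{B_p(G)}$ bijectively onto $\irr{B_p(N)}$. It is injective because distinct extensions of $\theta$ in the principal block would give two characters $\hat\theta\beta$ in $B_p(G)$. It is surjective by Step 2.

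I expect the invariance step (Step 1) to carry essentially all of the difficulty, and in the paper's context I would simply cite Alperin's and Dade's original papers for the full argument.
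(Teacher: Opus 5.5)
The paper gives no argument for this statement: it quotes it as a known theorem, citing \cite{Alp76} for solvable $G/N$ and \cite{Dad77} for the general case. Your fallback of citing those two papers is therefore exactly what the paper does, and it suffices. The four-step sketch you put around the citation, however, is not a proof and should not be presented as one. Every step that carries real content is either circular or a false inference.

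\textbf{Step 1.} The ``known fact'' that elements of $\cent GP$ fix every character of $B_p(N)$ does not follow from the Brauer correspondence or from Alperin--Brou\'e subpair theory. For the principal block it is part of the conclusion of the Alperin--Dade theorem, so invoking it is circular. The observation that $\cent GP$-orbits have $p'$-length is automatic and gives nothing: $\cent GP\cap N$ acts by inner automorphisms and $\cent GP/(\cent GP\cap N)\cong G/N$. The ``counting via Brauer pairs'' is never specified.

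\textbf{Step 2.} This is a non sequitur: $G$-invariance, $p\nmid |G:N|$ and $G=N\cent GP$ together do not give an extension. Take $p$ odd, $D\neq 1$ a $p$-group, $G=D\times D_8$ and $N=D\times \zent{D_8}$. All hypotheses hold with $P=D$, since $\cent GP\supseteq D_8$. The character $\theta={\bf 1}_D\times\lambda$, with $\lambda$ the nontrivial character of $\zent{D_8}=D_8'$, is $G$-invariant but does not extend to $G$. This $\theta$ lies outside $B_p(N)$, so extendibility has to come from the principal-block hypothesis, which Step 2 never uses. ``Letting $C$ act through its centralizing of $P$'' is not a construction: here $C$ contains $D_8$ and still $\theta$ does not extend.

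\textbf{Steps 3 and 4.} These only assert that $B_p(G)$ contains exactly one character over each $\theta$ and that this character restricts irreducibly. Identifying $B_p(G)$ through its Brauer correspondent in $\norm GP$ does not prove this, and the injectivity argument in Step 4 merely restates it.

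So invariance, extendibility and uniqueness — that is, the whole theorem — rest on the citation, exactly as in the paper.
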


\begin{proof}
The case where $G/N$ is solvable was proved in \cite{Alp76} and the general case
in \cite{Dad77}
\end{proof}

\begin{lem}\label{lem:covering} 
Let $G$ be a finite group and $p$ a prime. 
Let $N \nor G$ and $P \in \syl p N$. Write $E=N \cent G P$. Suppose that $p$ does not divide the order of $G/N$. Then
for all $\theta \in \irr{B_p(E)}$, every irreducible constituent of $\theta^G$ belongs to $B_p(G)$.
\end{lem}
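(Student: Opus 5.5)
The plan is to use Brauer's Third Main Theorem in the form of block induction, combined with the Alperin–Dade correspondence of Theorem \ref{thm:Alperin-Dade}.

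First I will set up the situation. Since $G/N$ is a $p'$-group, $P$ is also a Sylow $p$-subgroup of $G$. By the Frattini argument, $G=N\norm GP$. The subgroup $E=N\cent GP$ is normal in $G$: indeed $N\nor G$, and $\norm GP$ normalizes $\cent GP$, so $G=N\norm GP$ normalizes $N\cent GP$. Moreover $E/N$ is a $p'$-group and $E=N\cent EP$, so Theorem \ref{thm:Alperin-Dade} applies to $N\nor E$. Hence restriction gives a bijection $\irr{B_p(E)}\to\irr{B_p(N)}$.

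Next, let $\theta\in\irr{B_p(E)}$ and let $\chi$ be an irreducible constituent of $\theta^G$. By Frobenius reciprocity and Clifford's theorem, $\chi_E$ is a sum of $G$-conjugates of $\theta$. I will use the standard fact that for $E\nor G$, the block $B$ of $G$ containing $\chi$ covers the block $b$ of $E$ containing $\theta$. So it suffices to show that the only block of $G$ covering $B_p(E)$ is $B_p(G)$.

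The main point, and the only nontrivial step, is this last claim. I would argue via defect groups and Brauer's Third Main Theorem. For a normal subgroup $E$ of $p'$-index, any block $B$ covering $b$ has a defect group $D$ with $D\cap E=D$ a defect group of $b$ (Knörr's result, or simply that defect groups of covering blocks of $p'$-index normal subgroups coincide up to conjugacy). Thus $B$ has defect group $P$, and $B=\beta^G$ for a block $\beta$ of $P\cent GP$. The alternative I prefer is more elementary: since $|G:E|$ is prime to $p$, the blocks of $G$ covering $b$ are exactly the constituents of $\theta_0^G$, i.e. they are the blocks $B$ with $B$ covering $b$. Among these, $B_p(G)$ covers $B_p(E)$ because ${\bf 1}_G$ restricts to ${\bf 1}_E$.

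To get uniqueness I would use the following fact: because $E\supseteq \cent GP$, every block of $E$ with defect group $P$ is $G$-stable, and the covering blocks correspond to blocks of $G/E$ twisted appropriately. Concretely, $G$ acting on $E$ fixes $B_p(E)$, and by \cite[Theorem 9.4]{Nav98}-type results the blocks covering a $G$-stable block $b$ with $\cent GD\sbs E$ (where $D=P$ is the defect group of $b$) are unique: if $\cent G D\sbs E$, then $b^G$ is defined and is the unique block covering $b$ (Navarro's book, Theorem 9.19/9.26). Applying this with $D=P$ and $b=B_p(E)$ gives that the unique covering block is $b^G$. Finally, $b^G=B_p(G)$ by Brauer's Third Main Theorem, since $B_p(E)^G=B_p(G)$. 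I expect the main obstacle to be citing precisely the uniqueness of covering blocks when $\cent G D\sbs E$; everything else is routine.
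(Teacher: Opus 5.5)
Your final argument is correct and is essentially the paper's proof. Frattini gives $E\nor G$, and every constituent of $\theta^G$ lies in a block covering $B_p(E)$. Since $P$ is a defect group of $B_p(E)$ with $\cent GP\sbs E$, the block $B_p(E)^G$ is defined and is the unique block of $G$ covering $B_p(E)$; the paper cites the same results from Navarro's book, namely Theorems 9.19 and 9.26 and Lemma 9.20. Brauer's third main theorem then identifies $B_p(E)^G$ with $B_p(G)$.

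Some side remarks in your proposal should be dropped:

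\begin{itemize}
\item The Alperin--Dade step is never used.
\item Your ``more elementary alternative'' is circular as written.
\item The claim that every block of $E$ with defect group $P$ is $G$-stable is false. Take $G$ dihedral of order $30$, $p=5$ and $N=P$. Then $E=C_{15}$, and the involution swaps two non-principal $5$-blocks of $E$. Only the $G$-stability of $B_p(E)$ enters your argument, and even that is not needed.
\end{itemize}
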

\begin{proof} 
By the Frattini argument $G=N \norm G P$. In particular, $E\nor G$.
By \cite[Theorem 4.14 and Problem 4.2]{Nav98} and Brauer's third main theorem \cite[Theorem 6.7]{Nav98}, we have that $B_p(E)$ induces $B_p(G)$. By \cite[Theorem 9.19, Lemma 9.20 and Theorem 9.26]{Nav98}, $B_p(G)$ is actually the only $p$-block of $G$ covering $B_p(E)$.  In fact, any block $B$ of $G$ covering $B_p(E)$ has defect group $P$, then $B$ is regular with respect to $E$ and $B=B_p(E)^G=B_p(G)$. The result now follows from \cite[Theorem 9.2]{Nav98}.
\end{proof}

We are ready to prove a reduction theorem for Theorem A. Recall that for a positive integer $n$, we denote by $\xi_n$ a primitive $n$th root of unity. We denote by ${\bf O}^p(G)$ and ${\bf O}^{p'}(G)$, respectively, the smallest normal subgroups
 of $G$ with quotient a $p$-group and a $p'$-group.

\begin{thm}\label{reduction} 
Let $\pi=\{ 2, q \}$ where $q$ is an odd prime and $p\in \pi$. Let $G$ be a finite group of order divisible by $p$.
Suppose that the principal $p$-block of every finite simple non-abelian group $S$ of order divisible by $p$ possesses a nontrivial character of $\pi'$-degree with values in $\mathbb Q(\xi_q)$. 
Then there exists some ${\bf 1}_G\neq \chi \in \irr {B_p(G)}$ with $\pi'$-degree and $\Q(\chi)\sbs \mathbb Q(\xi_q)$.
\end{thm}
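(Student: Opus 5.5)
We argue by induction on $|G|$, using a minimal counterexample $G$. Since every character in $\irr{B_p(G)}$ contains ${\bf O}_{p'}(G)$ in its kernel, and $B_p(G/{\bf O}_{p'}(G))$ is contained in $B_p(G)$, we may assume ${\bf O}_{p'}(G)=1$: otherwise $G/{\bf O}_{p'}(G)$ is smaller and its order is still divisible by $p$. We then take a minimal normal subgroup $N$ of $G$ and distinguish several cases.

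\emph{Case 1: $G$ is not perfect with respect to $\pi$-quotients.} First suppose ${\bf O}^{p}(G)<G$. Then $G$ has a nontrivial linear character $\lambda$ of $p$-power order. Every $p$-power-order linear character lies in $B_p(G)$, since its restriction to $p$-regular elements is trivial. For $p=2$ we take $\lambda$ of order $2$, which is rational. For $p=q$ we take $\lambda$ of order $q$, so that $\Q(\lambda)=\Q(\xi_q)$. In either case $\lambda$ has degree $1$, which is $\pi'$, so we are done. We may therefore assume $G={\bf O}^{p}(G)$.

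\emph{Case 2: $N$ is a $p$-group.} Here $N$ is elementary abelian, and we look at $G/N$. If $p$ divides $|G/N|$, induction gives a character in $B_p(G/N)\sbs B_p(G)$ with the required properties. If $p$ does not divide $|G/N|$, then $N$ is a normal Sylow $p$-subgroup, so $G$ is $p$-solvable. By Theorem~\ref{thm:Fong} we get $\irr{B_p(G)}=\irr{G}$. It then suffices to find any nontrivial $\chi\in\irr G$ of $\pi'$-degree with $\Q(\chi)\sbs\Q(\xi_q)$, and here the main result of \cite{GHSV21} applies; this is presumably where the corrected Remark enters. Alternatively, one can use $G/N$ directly. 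If $G/N$ is nontrivial, its order is divisible by $2$ or $q$, and we can apply induction with the other prime $p'\in\pi$ when $p'$ divides $|G/N|$, using Fong again. If $|G/N|$ is a $\pi'$-number, we use the Glauberman correspondence on $\irr N$ to find a $G$-invariant nontrivial $\theta\in\irr N$ with suitable values. Such a $\theta$ extends to $G$ with degree $1$, and we choose the extension by determinantal order so that its values stay in $\Q(\xi_q)$. The field-of-values control in this last step is delicate.

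\emph{Case 3: $N$ is non-abelian.} Then $N=S_1\times\cdots\times S_t$ with $S_i\cong S$ simple and $p\mid |S|$. By hypothesis there is $1\neq\alpha\in\irr{B_p(S)}$ of $\pi'$-degree with $\Q(\alpha)\sbs\Q(\xi_q)$. The natural candidate is $\theta=\alpha\times\cdots\times\alpha\in\irr{B_p(N)}$, but it need not extend to $G$, and we must control its stabilizer. The plan is as follows. Fix $P\in\syl pG$. The Frattini argument lets us replace $\alpha$ by an $\norm G{S_1}$-orbit representative, and we arrange $\theta$ to be $P$-invariant, in fact invariant under a Sylow $2$-subgroup and a Sylow $q$-subgroup of its stabilizer, so that an extension argument applies. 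We use a tensor-induction construction: with $T=G_\theta$, $\theta$ extends to $T$, following the approach of \cite{NT06} via extending to $N\rtimes$ wreath-type subgroups. Since $|G:T|$ divides $|G:N\norm GP|$-type indices, we can arrange the index to be $\pi'$; this is where the choice of $\alpha$ invariant under a Hall-like $\{2,q\}$ structure is needed, and it is the main obstacle. We then pass to $G/N$-characters of $\pi'$-degree in $B_p$ and use Lemma~\ref{lem:covering} together with Theorem~\ref{thm:Alperin-Dade} to guarantee that the induced constituents lie in $B_p(G)$. Finally, the degree $\chi(1)=|G:T|\theta(1)\cdot(\text{extension part})$ must be $\pi'$, and the Galois action must fix $\chi$ for $\mathrm{Gal}(\Q^{ab}/\Q(\xi_q))$. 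The field is handled by choosing a canonical extension whose determinantal order is $\pi'$-coprime, which makes it Galois-stable.

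The step I expect to be hardest is producing a $G$-stable, or at least $\pi'$-index-stable, $B_p$-character of $N$ whose extension is simultaneously in the principal block, of $\pi'$-degree, and $\Q(\xi_q)$-valued. I would first try to pick $\alpha$ to be $\mathrm{Aut}(S)_P$-invariant, strengthening the simple-group hypothesis if necessary, and then use Alperin--Dade to handle the $p'$-part of $G/N$.
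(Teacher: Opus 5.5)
Your Cases 1 and 2 are sound. In particular, the first route in Case 2, via Theorem~\ref{thm:Fong} and the main result of \cite{GHSV21}, is exactly how the paper rules out $p$-solvable counterexamples; the Glauberman alternative is unnecessary. The genuine gap is Case 3, which carries the whole content of the reduction and which you leave as an unexecuted plan. You never produce a $\theta\in\irr{B_p(N)}$ whose stabilizer has $\pi'$-index. You never show that $\theta$ extends to that stabilizer. You never control the degree, block and field of the character of $G$ you would build from it. Several of the proposed devices also fail in general:
\begin{itemize}
\item the $\alpha$ supplied by the hypothesis can be moved by outer (e.g.\ diagonal) automorphisms of $2$- or $q$-power order, so $|G:G_\theta|$ need not be $\pi'$;
\item canonical extensions require $(|G_\theta:N|,\theta(1))=1$, which you have no control over;
\item you cannot strengthen the simple-group hypothesis (e.g.\ to ${\rm Aut}(S)_P$-invariance of $\alpha$), since it is given.
\end{itemize}

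The missing idea is that minimality removes the invariance problem altogether. First apply induction to $G/N$ in Case 3 as well, exactly as you did in Case 2. This gives $p\nmid|G:N|$, so a Sylow $p$-subgroup $P$ of $N$ is Sylow in $G$. Set $E=N\cent GP$.

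Lemma~\ref{lem:covering} applied to ${\bf 1}_E$ gives $\irr{G/E}\sbs\irr{B_p(G)}$. Hence $G/E$ has no nontrivial $\pi'$-degree character with values in $\Q(\xi_q)$. Then \cite[Theorem A]{GHSV21}, together with Feit--Thompson, forces $G/E$ to be a solvable $\pi'$-group.

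If $E<G$, minimality gives a suitable nontrivial $\psi\in\irr{B_p(E)}$. Lift it to some $\chi\in\irr G$ over $\psi$ of $\pi'$-degree with values in $\Q(\xi_q)$ via \cite[Lemma 2.2]{GHSV21}. Then $\chi$ lies in $B_p(G)$ by Lemma~\ref{lem:covering}.

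If $E=G$, Theorem~\ref{thm:Alperin-Dade} says that restriction $\irr{B_p(G)}\to\irr{B_p(N)}$ is a bijection. It preserves degrees, and since it commutes with the Galois action, it also preserves fields of values. Hence every character of $B_p(N)$, in particular $\alpha\times\cdots\times\alpha$, is automatically $G$-invariant and extends to a character of $B_p(G)$ with the same degree and field. This is how the paper proves the theorem, with no stabilizer or tensor-induction analysis needed.
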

\begin{proof}  
Let $G$ be a counterexample of minimal order to the statement.
Notice that no counterexample can be $p$-solvable, by Fong's theorem \ref{thm:Fong} and the main result of \cite{GHSV21}.
By hypothesis, $G$ is not simple. If $1<N\nor G$ then $\Irr(B_0(G/N))\sbs\Irr(B_0(G))$. Since $G$ is a minimal counterexample, $|G/N|$ cannot be divisible by $p$. In particular ${\bf O}^p(G)=G$. 

Let $N={\bf O}^{p'}(G)$.
We have that ${\bf O}^p(N)\nor G$ and $G/{\bf O}^p(N)$ is $p$-solvable and hence ${\bf O}^p(N)=N$. 
If $M \nor G$ and $M<N$, then $p$ divides $|N/M|$. In particular, by the minimality of $G$ as a counterexample $M=1$.
We have that $N$ is a minimal normal subgroup of $G$. 
In fact, $N$ must be a semisimple group. 
Let $1\neq P \in \syl p N \subseteq \syl p G$ and consider $E=N \cent G P$. 
By Lemma \ref{lem:covering}, ${\rm Irr}(G/E)\subseteq {\rm Irr}_{p'}(B_p)$. By hypothesis, the only irreducible character of $G/E$ with $\pi'$-degree and values in $\Q(\xi_q)$ is ${\bf 1}_{G/E}$, hence $G/E$ is a group of odd order using \cite[Theorem A]{GHSV21}. By using the Feit--Thompson odd order theorem, $G/E$ is solvable and every composition factor is a cyclic group of order $r \notin \pi$. 

If $E<G$, then by hypothesis $p$ divides $|E|$ and $E$ is not a counterexample to the statement, so there is some ${\bf 1}_E\neq \psi \in \irr{B_p(E)}$ of $\pi'$-degree with values in $\Q(\xi_q)$. Using \cite[Lemma 2.2]{GHSV21} and taking into account that $G/E$ is a $\pi'$-group we can produce $\chi \in \irr{G}$ over $\psi$ with $\pi'$-degree and values in $\Q(\xi_q)$. By Lemma \ref{lem:covering},
$\chi$ lies in $B_p$ and we get a contradiction to $G$ being a counterexample. Hence $E=G$. 

The Alperin--Dade theorem \ref{thm:Alperin-Dade} then tells us that $N$ has no nontrivial $\pi'$-degree character in $\irr{B_p(N)}$ with values in $\Q(\xi_q)$. 
By minimality of $G$ as a counterexample $G=N$.
Recall that $G$ is semisimple of order divisible by $p$. Let $S$ be a minimal normal subgroup of $G$, then $G\cong S^t$ for some $t \geq 1$. By hypotheses $S$
possesses 
a nontrivial character $\theta \in \irr{B_p(S)}$ of $\pi'$-degree with values in $\mathbb Q(\xi_q)$. Hence ${\bf 1}_G\neq \psi =\theta\times \cdots \times \theta \in \irr{B_p(G)}$ has $\pi'$-degree and $\Q(\psi)=\Q(\theta)\sbs \mathbb Q(\xi_q)$, a contradiction. 
\end{proof}

\section{Simple non-abelian groups}\label{sec:2}

The aim of this section is to prove that Theorem A holds whenever $G$ is a simple non-abelian group. In particular, at the end of this section we complete the proofs of Theorem A and Corollary B.

\subsection{Alternating Groups}
Let $n\in\mathbb{N}$ be greater than or equal to $5$ and let $p$ be a prime number such that $p\leq n$. The aim of this section is to show that $\mathrm{Irr}_{\{p,q\}'}(B_p(A_n))$ admits a non-trivial irreducible character $\chi$ such that $$\mathbb{Q}(\chi)\subseteq \mathbb{K},\ \text{for some}\ \mathbb{K}\in\{\mathbb{Q}(\xi_p), \mathbb{Q}(\xi_q)\},$$ for all prime numbers $q\neq p$. 
This is an extension of the work done in \cite[Theorem D]{GHSV21} where we complete a similar taks without taking blocks into account. While writing the present article we realised that the proof of \cite[Theorem D]{GHSV21} contains a small mistake. We take the occasion to correct it (see Remark \ref{rem: error} below). 

We recall that irreducible character of $S_n$ are naturally labelled by partitions of $n$. Setting $\mathcal{P}(n)$ as the set of partition of $n$, we let $\chi^\lambda$ denote the irreducible character of $S_n$ corresponding to $\lambda\in\mathcal{P}(n)$. 
We remark that the character $(\chi^\lambda)_{A_n}$ is irreducible if and only if $\lambda\neq \lambda'$, where $\lambda'$ is the conjugate of $\lambda$.
We refer the reader to \cite{OlssonBook} for a detailed account of the basic facts concerning the representation theory of symmetric groups and the corresponding combinatorics of partitions. Given $\lambda\in\mathcal{P}(n)$ 
 and $i,j\in\mathbb{N}$ we denote by
$H_{ij}(\lambda)$ the \textit{hook of} $\lambda$
corresponding to node $(i,j)$, and we denote its length by $h_{ij}(\lambda)$. 
For $e\in\mathbb{N}$, we let
$\mathcal{H}^{e}(\lambda)$ be the set consisting of all those nodes
$(i,j)$ of $\lambda$ such that $e$ divides $h_{ij}(\lambda)$.
Moreover, we let $C_e(\lambda)$ denote the $e$-core of $\lambda$.
For a fixed prime number $p$ we write $\mathcal{P}_{p'}(n)$ for the subset of $\mathcal{P}(n)$ consisting of those partitions $\lambda$ of $n$ such that $(\chi^\lambda(1), p)=1$. 

We start by stating a few preliminary statements that will be often used in our arguments.  
\begin{lem}\label{lem: olsson1}
Let $p$ be a prime and let $n=ap^k+r$, where $1\leq a\leq p-1$ and $0\leq r<p^k$. Let $\lambda\in\mathcal{P}(n)$. Then $\lambda\in\mathcal{P}_{p'}(n)$ if and only if $C_{p^k}(\lambda)\in\mathcal{P}_{p'}(r)$. 
\end{lem}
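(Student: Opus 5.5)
This is Macdonald's characterisation of $p'$-degree characters of $S_n$, which is classical and usually cited from \cite{OlssonBook}. I would prove it directly with the hook length formula and a $p$-adic valuation count. Write $\nu$ for the $p$-adic valuation. By the hook length formula, $\chi^\lambda(1)=n!/\prod_{(i,j)}h_{ij}(\lambda)$. So $\lambda\in\mathcal P_{p'}(n)$ if and only if
$$\nu(n!)=\sum_{(i,j)\in\lambda}\nu(h_{ij}(\lambda))=\sum_{m\ge1}|\mathcal H^{p^m}(\lambda)|.$$
The plan is to compare both sides with the corresponding quantities for the core $\mu=C_{p^k}(\lambda)$, a partition of $r+(a-w)p^k$. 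Here $w=|\mathcal H^{p^k}(\lambda)|$ is the $p^k$-weight of $\lambda$, and $w\le a$ since $n<(a+1)p^k$.

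\emph{Step 1: valuation of $n!$.} By Legendre's formula and $n=ap^k+r$ with $r<p^k$, we get $\nu(n!)=\nu((ap^k)!)+\nu(r!)$, with $\nu((ap^k)!)=a(p^k-1)/(p-1)$. Also $n<p^{k+1}$, so no hook of $\lambda$ has length divisible by $p^{k+1}$. Thus only $m\le k$ contributes.

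\emph{Step 2: counting hooks via abacus/quotients.} For $m\le k$, a standard fact (from the $p^m$-quotient description on the abacus) is that $|\mathcal H^{p^m}(\lambda)|$ equals the $p^m$-weight of $\lambda$. This weight is $w_m(\lambda)=(n-|C_{p^m}(\lambda)|)/p^m$. Since $C_{p^m}(\lambda)$ has $p^m$-weight $0$, we get $\sum_m |\mathcal H^{p^m}(\lambda)|=\sum_{m=1}^{k}w_m(\lambda)$. Iterating, $\mu=C_{p^k}(\lambda)$ has the same $p^m$-cores as $\lambda$ for $m\le k$, because removing a $p^k$-hook is a sequence of $p^m$-hook removals. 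Hence $w_m(\lambda)=w_m(\mu)+wp^{k-m}$. Summing over $m=1,\dots,k$ gives
$$\sum_m|\mathcal H^{p^m}(\lambda)|=\sum_m|\mathcal H^{p^m}(\mu)|+w\frac{p^k-1}{p-1}.$$

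\emph{Step 3: compare.} The general inequality $\sum_m|\mathcal H^{p^m}(\nu)|\le \nu(|\nu|!)$ holds because $\chi^\nu(1)$ is an integer. Apply it to $\mu$, where $|\mu|=r+(a-w)p^k$. By Legendre again, $\nu(|\mu|!)=\nu(r!)+(a-w)(p^k-1)/(p-1)$. Combining,
$$\sum_m|\mathcal H^{p^m}(\lambda)|\le \nu(r!)+a\frac{p^k-1}{p-1}=\nu(n!).$$
Equality holds if and only if $\mu$ has $p'$-degree.

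The first equivalence therefore says $\lambda\in\mathcal P_{p'}(n)$ if and only if $\mu$ is a $p'$-degree partition of $|\mu|$. It remains to show that this forces $w=a$, i.e.\ $|\mu|=r$. This is the main obstacle. If $w<a$, then $\mu$ is a $p^k$-core of size at least $p^k$. By Step 2 applied to $\mu$, $\mu$ has no hooks of length divisible by $p^k$, so $\sum_m|\mathcal H^{p^m}(\mu)|=\sum_{m<k}w_m(\mu)$.

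I would try to show this is strictly less than $\nu(|\mu|!)$. The $p^k$-core condition forces the $p^{k-1}$-quotient of $\mu$ to be a $p$-core tuple. This caps $\sum_{m<k}w_m(\mu)$ below $\nu(|\mu|!)$ once $|\mu|\ge p^k$, because $\nu(|\mu|!)$ includes the term $\lfloor |\mu|/p^k\rfloor\ge1$, which the hooks cannot reach. I would make this precise by induction on $k$, reducing via the $p^{k-1}$-quotient. Alternatively I would simply cite Macdonald's theorem as in \cite{OlssonBook}. Once $w=a$ is established, we have $\mu\in\mathcal P(r)$, and the statement follows.
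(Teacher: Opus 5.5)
You take a different route from the paper, which gives no argument and simply quotes Macdonald's criterion as \cite[Proposition 6.4]{OlssonBook}. Your Steps 1--3 are correct. They show $\nu(\chi^\lambda(1))=\nu(\chi^\mu(1))$ for $\mu=C_{p^k}(\lambda)$, using only the hook length formula, Legendre's formula, and the equality between the number of hooks of length divisible by $e$ and the $e$-weight. (Minor point: in Step 3 you use $\nu$ both for the valuation and for a partition.)

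The gap is the step you flag yourself and leave open. You never prove that a $p^k$-core $\mu$ with $|\mu|\ge p^k$ has degree divisible by $p$. The remark that the $p^{k-1}$-quotient of $\mu$ consists of $p$-cores, and the proposed induction on $k$, are not carried out, and they are not what makes the claim true. Your heuristic about the Legendre term $\lfloor |\mu|/p^k\rfloor$ is the right one, but it needs a termwise bound. For any partition $\rho$ of $N$ and any $m\ge 1$,
\[
|\mathcal H^{p^m}(\rho)|=\frac{N-|C_{p^m}(\rho)|}{p^m}\le \lfloor N/p^m\rfloor .
\]
Hence $\nu(\chi^\rho(1))=\sum_{m\ge 1}\bigl(\lfloor N/p^m\rfloor-|\mathcal H^{p^m}(\rho)|\bigr)$ is a sum of nonnegative terms. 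Now take $\rho=\mu$, a $p^k$-core, so $|\mathcal H^{p^k}(\mu)|=0$. The $m=k$ term alone equals $\lfloor |\mu|/p^k\rfloor=a-w\ge 1$ whenever $w<a$. This forces $w=a$, that is $|\mu|=r$, and completes your argument without quotients or induction.

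With that line added, your proof is a valid self-contained alternative. It is longer than the paper's one-line citation but uses only elementary facts. In fact, the displayed decomposition gives the general criterion that $\chi^\rho(1)$ is prime to $p$ if and only if $|\mathcal H^{p^m}(\rho)|=\lfloor N/p^m\rfloor$ for every $m$. The lemma then follows at once from your identity $w_m(\lambda)=w_m(\mu)+wp^{k-m}$. The citation buys brevity and the full $p$-core tower description, which is not needed for this lemma.
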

\begin{proof}
This is an immediate consequence of \cite[Proposition 6.4]{OlssonBook}. 
\end{proof}

The following is an easy consequence of the combinatorial description of $p$-blocks of symmetric groups. 

\begin{lem}\label{lem: olssonBlocks}
Let $p$ be a prime, let $n=pw+r$ for some $0\leq r\leq p-1$, and let $\lambda\in\mathcal{P}(n)$. Then $\chi^\lambda\in \mathrm{Irr}(B_p(S_n))$ if and only if $C_p(\lambda)=(r)$.  
In particular, if $C_p(\lambda)=(r)$ then every irreducible constituent of $(\chi^\lambda)_{A_n}$ lies in $\mathrm{Irr}(B_p(A_n))$.
\end{lem}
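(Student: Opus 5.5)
The first assertion is the Nakayama conjecture, in the form proved by Brauer and Robinson: $\chi^\lambda$ and $\chi^\mu$ lie in the same $p$-block of $S_n$ if and only if $C_p(\lambda)=C_p(\mu)$. We will quote this from \cite{OlssonBook}. Then $\chi^\lambda\in\mathrm{Irr}(B_p(S_n))$ exactly when $\chi^\lambda$ shares a block with the trivial character $\chi^{(n)}$, that is, when $C_p(\lambda)=C_p((n))$. To compute $C_p((n))$, note that the one-row partition $(n)$ has hook lengths $n,n-1,\dots,1$. Removing a $p$-hook therefore just shortens the row by $p$. Repeating this $w$ times leaves $(r)$, and since $r<p$ the partition $(r)$ has no hook of length $p$, so $(r)$ is a $p$-core. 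Hence $C_p((n))=(r)$, which gives the first assertion.

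For the second assertion we use covering of blocks for $A_n\nor S_n$. Suppose $C_p(\lambda)=(r)$, so $\chi^\lambda\in\mathrm{Irr}(B_p(S_n))$, and let $\theta$ be an irreducible constituent of $(\chi^\lambda)_{A_n}$. By \cite[Theorem 9.2]{Nav98}, $B_p(S_n)$ covers the block of $A_n$ containing $\theta$. Also, ${\bf 1}_{S_n}$ restricts to ${\bf 1}_{A_n}$, so $B_p(S_n)$ covers $B_p(A_n)$. The blocks of $A_n$ covered by a fixed block of $S_n$ form a single $S_n$-orbit, again by \cite[Theorem 9.2]{Nav98}. The principal block $B_p(A_n)$ is $S_n$-invariant, because conjugation fixes ${\bf 1}_{A_n}$. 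So that orbit is $\{B_p(A_n)\}$, and $\theta\in\mathrm{Irr}(B_p(A_n))$.

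Neither step is really an obstacle. The only points to get right are that $(r)$ is already a $p$-core and that the principal block is invariant under $S_n$. We do not need to treat $p=2$ separately, since the covering argument is uniform in $p$.
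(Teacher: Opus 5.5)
Your proof is correct and follows essentially the route the paper intends. The paper writes out no proof: it calls the lemma an easy consequence of the combinatorial description of $p$-blocks of $S_n$, which is the Nakayama conjecture together with $C_p((n))=(r)$, as you verify. Your covering argument for $A_n$ via Theorem 9.2 of Navarro's book is the same one the paper uses elsewhere, for instance to pass from $B_p(\widetilde G)$ to $B_p(G)$ in the Lie type case.
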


We are ready to focus on our main goal. We start by settling the case where $q>n$.

\begin{lem}\label{alterna q more than n}
Let $p,q$ be primes and let $n\in\mathbb{N}$ be such that $p\leq n<q$. Then the set $\mathrm{Irr}_{\{p,q\}'}(B_p(A_n))$ admits a rational non-trivial irreducible character.
\end{lem}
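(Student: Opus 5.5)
Since $q>n$, the prime $q$ does not divide $|S_n|$, so every irreducible character of $S_n$ and of $A_n$ has degree coprime to $q$. The statement therefore reduces to finding a non-trivial rational character in $\mathrm{Irr}_{p'}(B_p(A_n))$. The plan is to find a partition $\lambda\neq\lambda'$ of $n$ with $\lambda\neq(n),(1^n)$ such that $\chi^\lambda$ has $p'$-degree and $C_p(\lambda)=(r)$, where $n=pw+r$ with $0\le r\le p-1$. Then $(\chi^\lambda)_{A_n}$ is irreducible and non-trivial. It is rational, because characters of $S_n$ are rational. It lies in $B_p(A_n)$ by Lemma \ref{lem: olssonBlocks}, and its degree is $\chi^\lambda(1)$, which is coprime to $p$ and to $q$.

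The natural candidate is a hook partition. Write $n=ap^k+r'$ with $1\le a\le p-1$ and $0\le r'<p^k$. By Lemma \ref{lem: olsson1}, $\chi^\lambda$ has $p'$-degree exactly when $C_{p^k}(\lambda)$ is a $p'$-partition of $r'$.

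I would take $\lambda=(n-1,1)$, with degree $n-1$, when $p\nmid n-1$.
\begin{itemize}
\item Its $p$-core is obtained by removing $p$-hooks from the first row. These removals are possible as long as the row length stays at least $2$; when $n-1\equiv 1\pmod p$, a final step removes the corner node together with the row.
\item In the generic situation $p\nmid n(n-1)$ one checks that $C_p(\lambda)=(r)$.
\end{itemize}
More generally, I would use the hooks $(n-m,1^m)$, of degree $\binom{n-1}{m}$, and choose $m$ to be a small multiple of $p$, for instance $m=p$ or $m=p^k$. Then, by Lucas's theorem, $\binom{n-1}{m}$ is prime to $p$ provided the relevant $p$-adic digit of $n-1$ is nonzero. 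Removing the $p$-hook (rim of the leg) of length $p$ from $(n-m,1^m)$ leaves a one-row or near-one-row partition, and iterating gives $C_p=(r)$.

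For the case analysis I would split according to the base-$p$ expansion of $n$. If $n=ap^k+r'$ with $r'\ge 1$, take $\lambda=(n-p^k,1^{p^k})$ when this is not self-conjugate, or adapt it with a $p'$-core built from $(r')$ via Lemma \ref{lem: olsson1}. If $n$ is a multiple of $p^k$, use $(n-1,1)$, whose degree $n-1$ is prime to $p$ and whose $p$-core is empty, which equals $(r)=(0)$ when $r=0$. Self-conjugacy is only an issue for hooks with $n-m-1=m$, that is $n=2m+1$; in that case I would shift $m$ by a multiple of $p$. For very small $n$ (for example $n=5,6$ with $p=5$, or $p=2$ with small $n$) I would check directly, using $A_5$ and $A_6$ character tables if needed.

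The main obstacle is handling all residue patterns uniformly so that three conditions hold simultaneously: the $p'$-degree, the $p$-core being exactly $(r)$, and $\lambda\neq\lambda'$. In particular, $p=2$ with $n$ a power of $2$ and $p$ close to $n$ need separate care. I would first try the family $\lambda=(n-p^k,1^{p^k})$ together with $(n-1,1)$ and verify the cores through the abacus, falling back on explicit small cases.
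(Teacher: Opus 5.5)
Your reduction is right: since $q>n$ every degree is prime to $q$, so you need $\lambda\neq\lambda'$, $\lambda\notin\{(n),(1^n)\}$, $p'$-degree and $C_p(\lambda)=(r)$. Your hook choices also work where they apply: $(n-1,1)$ when $n=ap^k$, and $(n-p^k,1^{p^k})$ when $n=ap^k+r'$ with $r'\ge 1$. In the latter case Lucas gives $\binom{n-1}{p^k}\equiv a$, and removing the leg leaves a single row.

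\textbf{The gap.} The hook family cannot finish the proof, and your remedies for the bad cases fail. When $n=p^k+1$, your $\lambda=(n-p^k,1^{p^k})$ is $(1^n)$, which restricts to the trivial character of $A_n$. When $p$ is odd and $n=2p^k+1$, it is self-conjugate. These are infinite families (e.g.\ $n=10$ for $p=3$; $n=9,17,33,\dots$ for $p=2$), not small exceptions. In both families \emph{no} hook works. The degree of $(n-m,1^m)$ is $\binom{n-1}{m}$, and by Lucas $\binom{p^k}{m}\not\equiv 0$ only for $m\in\{0,p^k\}$, while $\binom{2p^k}{m}\not\equiv 0$ only for $m\in\{0,p^k,2p^k\}$. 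So ``shifting $m$ by a multiple of $p$'' destroys the $p'$-degree. Two examples:
\begin{itemize}
\item For $p=3$, $n=7$, the only $3'$-degree hooks are $(7)$, the self-conjugate $(4,1^3)$, and $(1^7)$.
\item For $p=3$, $n=10$, $\binom{9}{m}$ is divisible by $3$ for all $0<m<9$.
\end{itemize}
Your fallback ``adapt it with a $p'$-core built from $(r')$'' names no partition, and that is exactly where the content lies.

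\textbf{The missing idea.} You have to leave the hooks. The paper uses:
\begin{itemize}
\item the near-hook $(n-p^k,2,1^{p^k-2})$ when the multiple of $p$ below $n$ exceeds $p^k$. Its node $(2,1)$ has hook length $p^k$, and removing that rim hook leaves the one-row partition $(n-p^k)$. Lemma \ref{lem: olsson1} then gives $p'$-degree, and $C_p(\lambda)=(r)$. When $n=2p^k$ this near-hook is self-conjugate, so $(p^k,1^{p^k})$ is used instead.
\item the two-row partition $(n-a-1,a+1)$ when $n=p^k+a$ with $0\le a<p$ (this covers $n=p^k+1$). Its $(1,1)$-hook has length $p^k$. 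The case $n=2p-1$ uses $(p-1,1^p)$ instead.
\end{itemize}
In the examples above this gives $(4,2,1)$ for $n=7$ and $(8,2)$ for $n=10$, both of degree $35$ and $3$-core $(1)$.

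\textbf{A false side claim.} You say $C_p((n-1,1))=(r)$ when $p\nmid n(n-1)$. This is wrong: for $2\le r\le p-1$ the core is $(r-1,1)$, e.g.\ $C_5((6,1))=(1,1)$. So $(n-1,1)$ lies in the principal block only when $p\mid n$. Your final case split only uses it when $p^k\mid n$, so nothing breaks, but the claim should be dropped.
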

\begin{proof}
Let $n=a+pw$ for some $a,w\in\mathbb{N}$ such that $0\leq a\leq p-1$. Moreover, let 
$$pw=\sum_{j=1}\6kb_jp\6{j},$$
be the $p$-adic expansion of $pw$. Here $k\in\mathbb{N}_{\geq 1}$ and $b_1,\ldots, b_k\in [0,p-1]$ with $b_k\neq 0$.

We proceed by distinguishing three cases. 

\noindent$\bullet$ If $pw>p\6k$ then let $\chi:=(\chi\6\lambda)_{A_n}$, where 
$$\lambda=\begin{cases} (n-p\6k,2, 1\6{p\6k-2}) & \mathrm{if}\ n\neq 2p\6k,\\
\\
(p\6k, 1\6{p\6k}) & \mathrm{if}\ n=2p\6k.
\end{cases}
$$
We first notice that $\lambda\neq\lambda'$. This shows that $\chi\in\mathrm{Irr}(A_n)$ and that $\mathbb{Q}(\chi)=\mathbb{Q}$. It is also easy to see that $\lambda\notin\{(n), (1\6n)\}$ and therefore that $\chi$ is not the trivial character. We observe that
$h_{2,1}(\lambda)=p\6k$ and that $\lambda\smallsetminus H_{2,1}(\lambda)=(n-p\6k)$ is the trivial partition of $n-p\6k$. Using Lemma \ref{lem: olsson1} this shows that $(p,\chi(1))=1$. Moreover, it also follows that $C_p(\lambda)=(a)$. Since $n<q$ we clearly have that $(q,\chi(1))=1$ and we conclude that $\chi\in\mathrm{Irr}_{\pi'}(B_p(A_n))$. 

\noindent$\bullet$ Let us now assume that $pw=p\6k$ and let $\chi:=(\chi\6\lambda)_{A_n}$, where 
$$\lambda=\begin{cases} (n-(a+1), a+1) & \mathrm{if}\ n\neq 2p-1,\\
\\
(p-1, 1\6{p}) & \mathrm{if}\ n=2p-1.
\end{cases}
$$
In this case it is very easy to verify that $\chi\in\mathrm{Irr}_{\pi'}(B_p(A_n))$. We omit the full details of this verification. We just notice that when $n\neq 2p-1$ we have that $H_{1,1}(\lambda)=p\6k$. This shows that $C_p(\lambda)=(a)$, and that $(p,\chi(1))=1$ by Lemma \ref{lem: olsson1}. 
\end{proof}

\begin{rem}\label{rem: error}
In \cite[Theorem D]{GHSV21} for every pair of prime numbers $p,q$ and for every integer $n$ we determine a non-trivial irreducible character $\chi$ of $\mathrm{Irr}_{\{p,q\}'}(A_n)$ with small field of values. At the very end of the proof (in the last five lines) we deal with the case where $p\leq n<q$ and we consider the partition $\lambda=(n-(a+1),a+1)$. We also claim that $p$ does not divide $\chi\6\lambda(1)$. This claim is false, as shown for instance by taking $p=3$, $n=14$ and $\lambda=(11,3)$. 
This error can be fixed by replacing the final part of the proof of \cite[Theorem D]{GHSV21} with the above 
Lemma \ref{alterna q more than n}.
\end{rem}

We are now ready to prove our main result for alternating groups. 

\begin{thm}\label{thm:alternating}
Let $\pi=\{p,q\}$ be a set containing two distinct prime numbers, let $n\in\mathbb{N}_{\geq 5}$ and let $r\in \pi$.
If $r\leq n$ then there exists a non-trivial character $\chi\in\mathrm{Irr}_{\pi'}(B_r(A_n))$ such that 
$$\mathbb{Q}(\chi)\subseteq \mathbb{K},\ \text{for some}\ \mathbb{K}\in\{\mathbb{Q}(\xi_p), \mathbb{Q}(\xi_q)\}.$$
\end{thm}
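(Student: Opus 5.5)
We reduce to constructing explicit partitions. Without loss of generality we rename so that $r=p$ (the roles of the two primes in the statement are symmetric), and write $n=pw+a$ with $0\le a\le p-1$, so $w\ge 1$. If $q>n$, Lemma \ref{alterna q more than n} already provides a rational non-trivial character in $\mathrm{Irr}_{\pi'}(B_p(A_n))$, and we are done. So the main case is $q\le n$. Here the plan is to exhibit a partition $\lambda$ of $n$ satisfying the following three conditions:
\begin{itemize}
\item[(i)] $C_p(\lambda)=(a)$, so that $\chi^\lambda$ lies in $B_p(S_n)$ by Lemma \ref{lem: olssonBlocks};
\item[(ii)] $\lambda\in\mathcal P_{p'}(n)\cap\mathcal P_{q'}(n)$, which we check via Lemma \ref{lem: olsson1} applied to both primes;
\item[(iii)] $\lambda\ne\lambda'$ whenever possible, so that $\chi=(\chi^\lambda)_{A_n}$ is irreducible and rational, lies in $B_p(A_n)$ by Lemma \ref{lem: olssonBlocks}, and is non-trivial as long as $\lambda\notin\{(n),(1^n)\}$.
\end{itemize}

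The natural candidates are hooks and near-hooks, as in the proof of Lemma \ref{alterna q more than n}. For a hook $(n-m,1^m)$, all first-column and first-row hook lengths are explicit, so conditions (i) and (ii) reduce to arithmetic on the $p$-adic and $q$-adic expansions of $n$. Concretely, let $p^k$ and $q^l$ be the largest powers of $p$ and $q$ not exceeding $n$. Lemma \ref{lem: olsson1} asks that removing a $p^k$-hook (respectively a $q^l$-hook) leave a $p'$-partition (respectively $q'$-partition) of the remainder, recursively. A hook $(n-m,1^m)$ satisfies this for $p$ exactly when the binomial coefficient $\binom{n-1}{m}$ is coprime to $p$. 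So we look for $m$ with $1\le m\le n-2$, $m\neq (n-1)/2$, such that $\binom{n-1}{m}$ is prime to both $p$ and $q$; by Lucas' theorem this means the base-$p$ digits of $m$ are dominated by those of $n-1$, and likewise in base $q$. In addition, the $p$-core of the hook must be $(a)$, which will typically force $m$ to be a multiple of $p$ below $n-a$ (the $p$-hooks can be removed from the leg). Natural first tries are $m=p^k$ or $m=q^l$, corrected by small adjustments, or $m$ equal to the largest common "sub-digit" number, such as $m=n-1-a$ when this is permitted.

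When no hook works, which happens for small $n$ or when the digits of $n-1$ are too restrictive, we fall back on the known non-block argument of \cite[Theorem D]{GHSV21}. Recall that the characters produced there, or the self-conjugate case, may have irrational values lying in $\mathbb Q(\xi_p)$ or $\mathbb Q(\xi_q)$. If $\lambda=\lambda'$, then $\chi^\lambda$ splits into two conjugate constituents whose field of values is $\mathbb Q(\sqrt{\pm\prod h_{ii}})$. This is a quadratic field contained in $\mathbb Q(\xi_p)$ or $\mathbb Q(\xi_q)$ precisely when the product of the diagonal hook lengths is $p$ or $q$ times a square. This is why the conclusion allows $\mathbb K$ to be either cyclotomic field. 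We then verify, case by case, that the partition chosen there also has $p$-core $(a)$, modifying it where necessary.

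We expect the main obstacle to be the simultaneous control of the $p$-core and of coprimality to both $p$ and $q$ when $n$ is close to $p^k$ or $q^l$, or when $n$ is small. In these boundary cases we anticipate a finite list of exceptions (for example $n\le 2p$ or $n\le 2q$), and these we would settle by direct inspection of character tables.
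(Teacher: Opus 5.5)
Your treatment of the case $q>n$ via Lemma~\ref{alterna q more than n} matches the paper. For $p,q\le n$, however, what you give is a search heuristic, not a proof. No partition is ever written down, and the verification that constitutes the theorem is deferred to ``case by case'' checks.

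\textbf{Hooks do not suffice.} They fail already for the smallest groups. For $n=7$ and $\pi=\{2,3\}$, every nontrivial hook has degree $\binom{6}{m}\in\{6,15,20\}$. For $A_5$ with $\pi=\{2,3\}$, the only nontrivial $\pi'$-degree character comes from $(2,2,1)$.

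\textbf{The missing idea: one near-hook controlling both primes.} Let $a_1p^{m_1}$ and $b_1q^{k_1}$ be the leading terms of the $p$-adic and $q$-adic expansions of $n$, ordered so that $b_1q^{k_1}<a_1p^{m_1}$. Take
\[
\lambda=\bigl(n-b_1q^{k_1},\ n-a_1p^{m_1}+1,\ 1^{\,b_1q^{k_1}-(n-a_1p^{m_1}+1)}\bigr).
\]
Then $h_{1,1}(\lambda)=a_1p^{m_1}$ and $h_{2,1}(\lambda)=b_1q^{k_1}$. Removing either of these hooks leaves a one-row partition, so
\[
C_{p^{m_1}}(\lambda)=(n-a_1p^{m_1}),\qquad C_{q^{k_1}}(\lambda)=(n-b_1q^{k_1}).
\]
Lemma~\ref{lem: olsson1} then gives $\pi'$-degree. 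Lemma~\ref{lem: olssonBlocks} puts the character in both $B_p(A_n)$ and $B_q(A_n)$, since $C_p(\lambda)=(a_0)$ and $C_q(\lambda)=(b_0)$. Because membership holds in both principal blocks, the ordering of the leading terms costs nothing, and your separate normalization $r=p$ is unnecessary. This construction fails only when $n=a_1p^{m_1}=b_1q^{k_1}+1$, where $\lambda=(1^n)$.

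\textbf{The residual case is where your plan breaks down.} It is not a bounded family of small $n$. It contains, for example:
\begin{itemize}
\item $n=2^m$ with $q=2^m-1$ a Mersenne prime;
\item $n=p$ a Fermat prime with $q=2$;
\item $n=p=2q+1$.
\end{itemize}
So ``direct inspection of character tables'' is not available, and ``$n\le 2p$ or $n\le 2q$'' is not a finite condition either.

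Nor is there any room to ``modify where necessary''. When $(a_1,b_1)\in\{(1,2),(2,1),(1,1)\}$, one has $\mathrm{Irr}_{\pi'}(A_n)=\{1,\chi^+,\chi^-\}$. Here $\chi^\pm$ are the two constituents of $\chi^\zeta$ or $\chi^\eta$ for a single explicit self-conjugate partition, e.g.\ $\eta=(2^{m-1},2,1^{2^{m-1}-2})$ when $n=2^m=q+1$. The theorem holds in these cases only because these particular partitions satisfy $C_p=\emptyset$ and $C_q=(1)$. That core computation, uniform in $n$, is exactly what your proposal leaves out.

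Likewise, the subcases $a_1\ge 3$ or $b_1\ge 3$ need specific rational partitions with their cores checked. Examples are $((a_1-1)p^{m_1},2,1^{p^{m_1}-2})$ and $(1+(b_1-1)q^{k_1},1^{q^{k_1}})$.
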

\begin{proof}
If both primes $p$ and $q$ are larger than $n$ then the statement holds vacuosly as no $r\in\pi$ can possibly satisfy $r\leq n$. 
Let us now assume that exactly one of the primes in $\pi$ is larger than $n$. Without loss of generality we can assume that $p\leq n<q$. In this case $r=p$ and the statement holds by Lemma \ref{alterna q more than n}. 
Hence, we can now focus on the case where both $p,q\leq n$. 
In \cite[Theorem D]{GHSV21} we have shown the existence of a non-trivial $\chi\in\mathrm{Irr}_{\pi'}(A_n)$. We are now going to verify that such a character $\chi$ actually lies in both $B_p(A_n)$ and $B_q(A_n)$. This is of course enough to prove the theorem. 

Let $n=\sum_{i=1}\6{t}a_ip\6{m_i}+a_0=\sum_{j=1}\6{r}b_jq\6{k_j}+b_0$ be the
$p$-adic and respectively $q$-adic expansions of $n$. Here
$m_1>m_2>\cdots >m_t\geq 1$ and $k_1>k_2>\cdots k_r\geq 1$. Without
loss of generality, possibly exchanging the roles of the primes $p$ and $q$, we can assume that $b_1q\6{k_1}<a_1p\6{m_1}$. 

We first
consider the case where $n$ does not satisfy the following equalities: $a_1p\6{m_1}=n=b_1p\6{k_1}+1$. 
In this setting we take $\lambda\in\mathcal{P}(n)$ to be defined by:
$$\lambda=(n-b_1q^{k_1}, n-a_1p^{m_1}+1, 1^{b_1q^{k_1}-(n-a_1p^{n_1}+1)}),$$
and we let $\chi=(\chi\6\lambda)_{A_n}$. We observe that $h_{1,1}(\lambda)=a_1p\6{m_1}$ and that $h_{2,1}(\lambda)=b_1q\6{k_1}$. It follows that 
$$C_{p\6{m_1}}(\lambda)=(n-a_1p\6{m_1})\ \ \text{and}\ \ C_{q\6{k_1}}(\lambda)=(n-b_1q\6{k_1}),$$
are trivial partitions of $n-a_1p\6{m_1}$ and $n-b_1q\6{k_1}$, respectively. 
This shows that $C_p(\lambda)=C_p(C_{p\6{m_1}}(\lambda))=(a_0)$ and that $C_q(\lambda)=C_q(C_{q\6{k_1}}(\lambda))=(b_0)$. In particular we have that $\chi\in\mathrm{Irr}(B_p(A_n))\cap \mathrm{Irr}(B_q(A_n))$ by Lemma \ref{lem: olssonBlocks}. In the proof of \cite[Theorem D]{GHSV21} it is shown that $1\neq \chi\in\mathrm{Irr}_{\pi'}(A_n)$ and that $\mathbb{Q}(\chi)=\mathbb{Q}$. We conclude that $\chi$ is the irreducible character we were looking for. 

We can now assume that $a_1p\6{m_1}=n=b_1p\6{k_1}+1$. To ease the notation we set $a:=a_1, b:=b_1, m:=m_1$ and $k:=k_1$. We split this second case into several subcases. 

\noindent$\bullet$ If $b\geq 3$ we consider $\lambda=(1+(b-1)q\6k, 1\6{q\6k})$ and we let $\chi=(\chi\6\lambda)_{A_n}$. We observe that $h_{1,1}(\lambda)=ap\6{m}$, $h_{1,2}(\lambda)=(b-1)q\6{k}$ and $h_{2,1}(\lambda)=q\6{k}$. It follows that 
$$C_p(\lambda)=C_p(C_{p\6{m}}(\lambda))=\emptyset,\ \ \text{and}\ \ C_q(\lambda)=C_q(C_{q\6{k}}(\lambda))=(1).$$
We deduce that $\chi\in\mathrm{Irr}(B_p(A_n))\cap \mathrm{Irr}(B_q(A_n))$ by Lemma \ref{lem: olssonBlocks}. Again, using the same ideas adopted in the proof of \cite[Theorem D]{GHSV21} we conclude that $1\neq \chi\in\mathrm{Irr}_{\pi'}(B_r(A_n))$ for any $r\in\pi$ and that $\mathbb{Q}(\chi)=\mathbb{Q}$. 

\noindent$\bullet$ Assume now that $b\leq 2$ and that $a\geq 3$. In this case let $\mu=((a-1)p\6m, 2, 1\6{p\6m-2})$ and we let $\chi=(\chi\6\mu)_{A_n}$. Notice that $h_{1,1}(\mu)=bq\6{k}$, $h_{1,2}(\lambda)=(a-1)p\6{m}$ and $h_{2,1}(\lambda)=p\6{m}$. It follows that 
$$C_p(\mu)=C_p(C_{p\6{m}}(\mu))=\emptyset,\ \ \text{and}\ \ C_q(\mu)=C_q(C_{q\6{k_1}}(\mu))=(1).$$
We deduce that $\chi\in\mathrm{Irr}(B_p(A_n))\cap \mathrm{Irr}(B_q(A_n))$ by Lemma \ref{lem: olssonBlocks}.
From the proof of \cite[Theorem D]{GHSV21} we have that $1\neq \chi\in\mathrm{Irr}_{\pi'}(A_n)$ and that $\mathbb{Q}(\chi)=\mathbb{Q}$. This shows that 
$1\neq \chi\in\mathrm{Irr}_{\pi'}(B_r(A_n))$ for any $r\in\pi$ and that $\mathbb{Q}(\chi)=\mathbb{Q}$. 

\noindent$\bullet$ We can now assume that $a,b\in\{1,2\}$. In particular we need to study the problem for any pair of integers $(a,b)\in\{(1,1), (1,2), (2,1)\}$. We start by treating at once the cases $(a,b)=(1,2)$ and $(a,b)=(2,1)$. 
In order to do this, let $\zeta\in\mathcal{P}(n)$ be defined as follows:
$$\zeta=\begin{cases} (1+q\6k, 1\6{q\6k}) & \mathrm{if}\ (a,b)=(1,2),\\
\\
(p\6m, 2, 1\6{p\6m-2}) & \mathrm{if}\ (a,b)=(2,1).
\end{cases}
$$
Moreover, we denote by $\chi\6+$ and $\chi\6-$ the irreducible constituents of $(\chi^\zeta)_{A_n}$. 
From the proof of \cite[Theorem D]{GHSV21} we have that $\mathrm{Irr}_{\pi'}(A_n)=\{1, \chi\6+, \chi\6-\}$ and that 
$$\mathbb{Q}(\chi\6\pm)\subseteq\begin{cases} \mathbb{Q}(\xi_p) & \mathrm{if}\ (a,b)=(1,2),\\
\\
\mathbb{Q}(\xi_q) & \mathrm{if}\ (a,b)=(2,1).
\end{cases}
$$
It is not difficult to compute the $p$-core and the $q$-core of $\zeta$. In fact we have that
$C_p(\zeta)=\emptyset$ and $C_q(\zeta)=(1)$. As usual this implies that both $\chi\6+$ and $\chi\6-$ lie in both the principal $p$-block and the principal $q$-block of $A_n$. This concludes this case. 

Finally, we assume that $(a,b)=(1,1)$. In other words we have that $p\6m=q\6k+1$. This implies that $2\in\pi$. 
In this case we let $\eta\in\mathcal{P}(n)$ be defined as follows:
$$\eta=\begin{cases} (2\6{m-1}, 2, 1\6{2\6{m-1}-2}) & \mathrm{if}\ p=2,\\
\\
(1+2\6{k-1}, 1\6{2\6{k-1}}) & \mathrm{if}\ q=2.
\end{cases}
$$
Moreover, we denote by $\chi\6+$ and $\chi\6-$ the irreducible constituents of $(\chi^\eta)_{A_n}$. 
From the proof of \cite[Theorem D]{GHSV21} we have that $\mathrm{Irr}_{\pi'}(A_n)=\{1, \chi\6+, \chi\6-\}$ and that 
$$\mathbb{Q}(\chi\6\pm)\subseteq\begin{cases} \mathbb{Q}(\xi_p) & \mathrm{if}\ q=2,\\
\\
\mathbb{Q}(\xi_q) & \mathrm{if}\ p=2.
\end{cases}
$$
Computing the $p$-core and the $q$-core of the partition $\eta$ (in both the possible cases), we easily deduce that $\chi\6+$ and $\chi\6-$ lie in both the principal $p$-block and the principal $q$-block of $A_n$. This concludes the proof. 
\end{proof}

 \subsection{Sporadic and Lie type groups}

 \begin{prop}\label{prop:simple-exceptions} Let $\pi=\{ 2, q\}$ where $q$ is an odd prime.
 Let $S$ be a sporadic group, the Tits group or a simple group of Lie type with exceptional Schur multiplier.  Let $p\in\pi$  and assume $(|S|,p)\neq 1$. Then there is ${\bf 1}_S\neq \chi\in\Irr(B_p)$ with $\pi'$-degree and such that $\mathbb{Q}(\chi)\sbs\mathbb{Q}(\xi_q)$.
 \end{prop}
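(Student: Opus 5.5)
This is a finite check over an explicit list, so I will prove it by computer verification with GAP and its Character Table Library. The groups involved are the 26 sporadic groups, the Tits group ${}^2F_4(2)'$, and the simple groups of Lie type with exceptional Schur multiplier. The latter are $L_2(4)$, $L_2(9)$, $L_3(2)$, $L_3(4)$, $L_4(2)$, $U_4(2)$, $U_4(3)$, $U_6(2)$, $Sp_6(2)$, $O_7(3)$, $O_8^+(2)$, $G_2(3)$, $G_2(4)$, $F_4(2)$, ${}^2E_6(2)$, $Sz(8)$ and $L_2(8)$-type small cases. Those isomorphic to alternating groups, such as $L_2(4)\cong L_2(5)\cong A_5$, $L_2(9)\cong A_6$ and $L_4(2)\cong A_8$, are already covered by Theorem \ref{thm:alternating}. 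For those the relevant $\mathbb K$ must be $\Q(\xi_q)$: when $p=2$ the theorem already gives $\Q(\xi_2)=\Q\sbs\Q(\xi_q)$, but this is only a shortcut for the rational cases. Verifying them directly in GAP is equally easy, so I will simply include them in the computation.

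For each such $S$ I will only need to consider odd primes $q$ dividing $|S|$. If $q\nmid |S|$, then $\pi'$-degree just means odd degree. In that case I need a nontrivial odd-degree character $\chi\in\Irr(B_p(S))$ with $\Q(\chi)\sbs\Q(\xi_q)$, and $p=2$ necessarily. Since $\Q(\xi_q)\cap\Q(\chi)$ varies with $q$, I will handle this uniformly by showing something stronger. The claim is that $B_2(S)$ always contains a nontrivial \emph{rational} character of odd degree whose degree is moreover coprime to every odd prime outside a small set. More precisely, the argument is as follows. For any $q\nmid|S|$, a nontrivial rational odd-degree character in $B_2(S)$ suffices, because its degree is automatically prime to $q$. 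Its existence follows from the principal-block version of Navarro--Tiep for $p=2$, which can be checked directly from the tables. So only the finitely many triples $(S,q,p)$ with $q\mid|S|$ and $p\in\{2,q\}$ need to be examined.

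For each such triple, the computation runs in the following steps.
\begin{enumerate}
\item Compute the $p$-blocks via \texttt{PrimeBlocks} and take the principal one.
\item List the nontrivial $\chi\in\Irr(B_p(S))$ with $2\nmid\chi(1)$ and $q\nmid\chi(1)$.
\item Test whether all values of $\chi$ lie in $\Q(\xi_q)$. This is the Galois-conductor test: $\chi$ qualifies iff its conductor divides $q$, equivalently iff $\chi$ is fixed by the Galois automorphisms of $\Q(\xi_{|S|})$ fixing $\xi_q$.
\end{enumerate}
In practice I expect that for almost all triples a rational character does the job. Often a single character lies in both $B_2$ and $B_q$, as the introduction mentions happens for $M_{23}$.

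The main obstacle is the largest cases, namely the Monster, the Baby Monster, ${}^2E_6(2)$ and $F_4(2)$. Their principal blocks are available through the stored tables, but the checks can be heavy. For these I would try a small rational character first: the lowest nontrivial odd-degree rational ones. I would decide block membership by the central character criterion $\omega_\chi\equiv\omega_{\bf 1}$ modulo a prime over $p$, computed on class sizes. This avoids computing all blocks, and it lets me handle each relevant $q$ by checking degree coprimality only. If for some $q$ no rational candidate has degree prime to $q$, I will then search among irrational characters with conductor $q$.
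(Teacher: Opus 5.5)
Your plan is correct and is essentially the paper's proof, which simply states that the claim is confirmed with GAP. Your explicit reduction to the finitely many primes $q$ dividing $|S|$ is a useful detail the paper leaves implicit: for $q\nmid|S|$ a nontrivial rational odd-degree character suffices, it exists by Navarro--Tiep, and it lies in $B_2(S)$ by Fong's theorem on real $2$-Brauer characters.

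One small correction concerns the groups isomorphic to alternating groups. With $\pi=\{2,q\}$, Theorem \ref{thm:alternating} always gives $\Q(\chi)\sbs\Q(\xi_2)=\Q$ or $\Q(\chi)\sbs\Q(\xi_q)$, so it covers these groups for both block primes $p\in\{2,q\}$, not merely as a ``shortcut for the rational cases''.
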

 \begin{proof} This can be confirmed using \cite{GAP}.
 \end{proof}

We establish some notation to deal with the remaining simple groups of Lie type. Let $\mathbf{G}$ be a connected reductive group defined over $\overline{\mathbb{F}}_\ell$ for some prime $\ell$. A simple group of Lie type $S$ occurs as $S=G/\zent G$ where $G=\mathbf{G}^F$ is the set of fixed points of a Steinberg morphism $F$, and $\mathbf{G}$ is simple of simply connected type. Let $\sigma:\mathbf{G}\hookrightarrow\widetilde{\mathbf{G}}$ be a regular embedding as in \cite[(15.1)]{CabEng}, where $\widetilde{\mathbf{G}}$ is also a connected reductive group defined over $\overline{\mathbb{F}}_\ell$, and with $\zent{\widetilde{\mathbf{G}}}$ connected. Let $\mathbf{G}^*$ and $\widetilde{\mathbf{G}}^*$ be the dual groups of $\mathbf{G}$ and $\widetilde{\mathbf{G}}$ respectively and denote by $\sigma^*:\widetilde{\mathbf{G}}^*\rightarrow\mathbf{G}^*$ the surjective morphism from \cite[(15.1*)]{CabEng}. Note that $\ker{\sigma^*}\sbs\zent{\widetilde{\mathbf{G}}^*}$. Write $\widetilde{G}=\widetilde{\mathbf{G}}^F$ and note that $G\nor \widetilde{G}$. Denote also $G^*=\mathbf{G}^*$ and $\widetilde{G}^*=(\widetilde{\mathbf{G}}^*)^F$. The surjective morphism $\sigma^*$ also induces a surjection $\sigma^*:\widetilde{G}^*\rightarrow G^*$ with central kernel.

 \begin{prop}\label{prop: lie type} Let $\pi=\{ 2, q\}$ where $q$ is an odd prime.
 Let $S$ be a finite simple group of Lie type defined over a field of characteristic $\ell$ not considered in Proposition \ref{prop:simple-exceptions}, let $p\in\pi$  and assume $(|S|,p)\neq 1$. Then there is ${\bf 1}_S\neq \chi\in\Irr(B_p)$ with $\pi'$-degree and such that $\mathbb{Q}(\chi)\sbs \mathbb{Q}(\xi_q)$.
 \end{prop}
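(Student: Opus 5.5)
We split according to whether $p=\ell$ (defining characteristic) or $p\neq\ell$, and work with $G=\mathbf{G}^F$ and $\widetilde G$ as set up above, viewing characters of $S$ as characters of $G$ trivial on $\zent G$. Note that $\Irr(B_p(S))\sbs\Irr(B_p(G))$ corresponds to characters of $B_p(G)$ with $\zent G$ in the kernel. Throughout, a natural source of characters with small field of values is the set of unipotent characters. By results of Lusztig and Geck, these are rational except for a short explicit list (some characters of exceptional groups and the Suzuki/Ree groups, with values in $\Q(\sqrt{-\ell})$, $\Q(\xi_3)$, $\Q(\xi_4)$, $\Q(\xi_5)$, etc.). Unipotent characters contain $\zent G$ in their kernel, so they are characters of $S$.

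\textbf{Case $p=\ell$.} The characters of $p'$-degree in the principal block, and in fact all non-defect-zero characters, lie in $B_p(G)$: by Humphreys' theorem, $G$ has only the principal block and defect-zero blocks (Steinberg) in defining characteristic. So it suffices to find a nontrivial character of $S$ of $\{2,q\}'$-degree with values in $\Q(\xi_q)$ whose degree is prime to $\ell$. Here \cite[Theorem A]{GHSV21} or the existence results of \cite{NT06, NT08} already give candidates. More concretely, I would use semisimple characters $\chi_s$ with $s\in G^*$ a semisimple element centralizing a Sylow $\ell$-subgroup's dual torus, for instance a regular semisimple $s$ in a maximally split torus, or an element with $|s|$ a power of $2$ or of $q$. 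Then $\chi_s(1)=[G^*:\cent{G^*}{s}]_{\ell'}$ and $\Q(\chi_s)\sbs\Q(\xi_{|s|})$, and I would arrange the centralizer to contain Sylow $2$- and $q$-subgroups. The simplest candidate is an involution or $q$-element central in a Sylow-containing Levi. Choosing $s$ of $2$-power order gives values in $\Q(\xi_{2^a})$, which is not allowed unless the values are rational. So I would pick $s$ with $s$ conjugate to all its generators, e.g. $s$ an involution, giving rational values.

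\textbf{Case $p\neq\ell$.} I would use Broué--Michel: $\mathcal E_p(G,1)$, the union of Lusztig series $\mathcal E(G,t)$ with $t$ a $p$-element, is a union of $p$-blocks, and the principal block is the block containing the trivial character. By Cabanes--Enguehard, the unipotent characters in $B_p(G)$ are those in the $d$-Harish-Chandra series above $(\mathbf L,1)$ with $\mathbf L$ the centralizer of a Sylow $d$-torus, where $d=d_p(\ell)$ (suitably modified for $p=2$). For $p=2$ all unipotent characters lie in the principal block, for most types. Hence I would look for a nontrivial unipotent character in this $d$-series whose degree is prime to $2$ and $q$, and whose field of values is rational (or contained in $\Q(\xi_q)$). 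Degrees of unipotent characters are explicit polynomials in $\ell^f$. Characters of $p'$-degree in the principal series correspond via Malle's bijection to $p'$-characters of the relative Weyl group $W_G(\mathbf L)$. So I would reduce to showing that the relative Weyl group $W(d)$, a complex reflection group, has a nontrivial linear or small-degree character whose associated unipotent character has $\{2,q\}'$-degree. When this fails, for example in type $A$ with $\ell^f$ such that all nontrivial unipotent degrees are even, I would instead use semisimple characters $\chi_t$ with $t$ a $p$-element central in a Sylow-containing Levi of $G^*$. These lie in $B_p$ (Broué--Michel/Hiss), and I would choose $t$ so that $\chi_t$ has $\{2,q\}'$-degree and $\Q(\chi_t)\sbs\Q(\xi_{|t|})$. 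For $p=q$ this lies in $\Q(\xi_{q^a})$, so I need $t$ of order exactly $q$ or a rationality argument via Galois conjugacy of $t$. For $p=2$, I need $t$ real/rational, e.g. an involution.

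\textbf{Main obstacle.} The hard part is the uniform control of the $q$-part of the degree simultaneously with block membership, in particular for $p=2$ in types $A$ and ${}^2A$ (where $\mathrm{PSL}_n(\ell^f)$ with $q\mid \ell^f\pm1$ forces many unipotent degrees to be divisible by $q$). I would handle this by a case analysis on $d_q(\ell^f)$. I would use the hook formula for unipotent degrees in type $A$ and pick a hook partition analogous to the alternating-group choices in Theorem~\ref{thm:alternating}, with $q$-adic and $2$-adic expansions of $n$ dictating the hook. I would check via $d$-cores (Fong--Srinivasan) that the character lies in $B_p$, and deal with small rank and $\zent G$-kernel issues by direct inspection.
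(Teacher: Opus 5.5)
Your case $p=\ell$ is correct and is the paper's argument. A character of $\pi'$-degree has $\ell'$-degree, so it is not the Steinberg character, and it lies in $B_\ell(S)$ by Humphreys. (This description of the blocks holds for $S$; the simply connected $G$ with $\zent G\neq 1$ has several blocks of full defect.) A nontrivial $\chi\in\Irr_{\pi'}(S)$ with $\Q(\chi)\sbs\Q(\xi_q)$ is then supplied by \cite{GHSV21}; the paper quotes Proposition 4.5 there. The semisimple-character discussion in that paragraph is not needed.

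The case $p\neq\ell$, however, is not proved. When $\ell\notin\pi$, $d$-Harish-Chandra theory and Malle's bijection only control the $p$-part of the degree of a unipotent character in $B_p$. The $q$-part and the field of values are left to a case analysis that you yourself call the main obstacle, sketch only for type $A$, and never carry out. The paper avoids this entirely by quoting \cite[Proposition 3.7]{NRS22}, which for $\ell\notin\pi$ already provides a nontrivial \emph{rational} character in $\Irr_{\pi'}(B_p(S))$. Without that input, or a complete verification over all Lie types, this case remains open.

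When $\{\ell,p\}=\pi$, your primary strategy cannot succeed. In defining characteristic the trivial character is the only unipotent character of $\ell'$-degree, so no nontrivial unipotent character has $\pi'$-degree. Your fallback, semisimple characters attached to $p$-elements, is the right mechanism, but you never produce the element. You need a $p$-element $s\in G^*$ such that $\chi_s$ has $p'$-degree, $\zent G\sbs\ker{\chi_s}$ and $\Q(\chi_s)\sbs\Q(\xi_q)$.

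Moreover, the inclusion $\Q(\chi_s)\sbs\Q(\xi_{|s|})$ that you rely on is a connected-centre fact and fails in the simply connected $G$. For example, take an involution $s\in\mathrm{PGL}_2(\ell^f)$ with disconnected centralizer and $f$ odd. The two semisimple characters of $\mathrm{SL}_2(\ell^f)$ in $\mathcal E(G,s)$ are Galois conjugate, with values in $\Q(\sqrt{\pm\ell})$. So the field of values has to be controlled through the choice of $s$.

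The paper takes $s$ from \cite[Proposition 4.5]{GHSV21}, where it is constructed with exactly these properties. It then gets block membership as follows:
\begin{enumerate}
\item Lift $s$ to a $p$-element $\tilde s\in\widetilde G^*$, namely the $p$-part of any preimage under $\sigma^*$.
\item Apply \cite[Corollary 3.4]{Hiss90} in the connected-centre group $\widetilde G$ to obtain $\chi_{\tilde s}\in\Irr(B_p(\widetilde G))$.
\item Observe that $\chi_s$ is a constituent of $(\chi_{\tilde s})_G$, hence lies in $B_p(G)$ by block covering.
\item Descend to $B_p(S)$, since $\zent G\sbs\ker{\chi_s}$.
\end{enumerate}
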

 \begin{proof} By Proposition \ref{prop:simple-exceptions} we can assume that $S$ has not an exceptional Schur multiplier nor is the Tits group. If $\ell \notin \pi$ by \cite[Proposition 3.7]{NRS22} we can actually choose $\chi$ to be rational-valued. Hence we assume $\ell\in \pi$. If $\ell=p$, then by Dagger--Humphreys \cite[Theorem 3.3]{Cab18}, $\Irr(B_p(S))=\Irr(S)-\{\mathsf{St}_S\}$,
so $\Irr_{p'}(B_p(S))=\Irr(S)$. We are therefore done  by \cite[Proposition 4.5]{GHSV21}.

We are left to deal with the situation in which $\ell\neq p$, so  $\{ \ell, p\}=\pi=\{ 2, q \}$. We use now the notation introduced before the statement of this result. In this case, \cite[Proposition 4.5]{GHSV21} constructs a semisimple character $\chi_s$ associated to a semisimple element $s$ of the dual group $G^*$ such that $s$ has $p$-power order and $\mathbb{Q}(\chi_s)\sbs\mathbb{Q}(\xi_q)$. Let $\widetilde{s}\in\widetilde{G}^*$ be a preimage of $s$ by $\sigma^*$ of $p$-power order (if $t\in(\sigma^*)^{-1}(s)$, writing $t=t_pt_{p'}$ where $t_p$ is the $p$-part of $t$ and $t_{p'}$ si the $p'$-part of $t$, we have that $\sigma^*(t_{p'})=1$ so $\sigma^*(t_p)=s$ and we can indeed take such a preimage). By \cite[Corollary 3.4]{Hiss90} we have that $\chi_{\widetilde{s}}$ lies in $B_p(\widetilde{G})$. Now \cite[Proposition 15.6]{CabEng} implies that $\chi_s$ is a constituent of $(\chi_{\widetilde{s}})_{G}$ so by \cite[Theorem 9.2]{Nav98} we have that $\chi_s\in\Irr(B_p(G))$.
Since $S=G/\zent G$ and the characters $\chi_s$ constructed in \cite[Proposition 4.5]{GHSV21} contain $\zent G$ in their kernel, we have $\chi_s\in\Irr(B_p(S))$ by \cite[Lemma 17.2]{CabEng}, and this completes the proof.
 \end{proof}

\subsection{Main results}

We can now prove Theorem A and Corollary B from the Introduction that we restate here for the reader's convenience. 

\begin{thm}\label{thm:ThmA}   Let $\pi=\{ 2, q \}$ where $q$ is and odd a prime. Let $p \in \pi$ and $G$ be a finite group of order divisible by $p$. 
Then there is some ${\bf 1}_G\neq \chi \in \irr {G}$ in the principal $p$-block of $G$ with $\pi'$-degree and $\Q(\chi)\sbs \Q(\xi_q)$.
\end{thm}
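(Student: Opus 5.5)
The plan is to combine the reduction Theorem \ref{reduction} with the verifications for simple groups carried out in this section. By Theorem \ref{reduction}, it suffices to show that for every finite simple non-abelian group $S$ of order divisible by $p$, the principal block $B_p(S)$ contains a nontrivial irreducible character of $\pi'$-degree with $\Q(\chi)\sbs\Q(\xi_q)$. We therefore run through the Classification of Finite Simple Groups.

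\textbf{Alternating groups.} Let $S=A_n$ with $n\geq 5$. Since $p$ divides $|S|$, we have $p\leq n$. Apply Theorem \ref{thm:alternating} with the pair of primes $\{2,q\}$ and $r=p$. This yields a nontrivial $\chi\in\Irr_{\pi'}(B_p(A_n))$ with $\Q(\chi)\sbs\Q(\xi_2)$ or $\Q(\chi)\sbs\Q(\xi_q)$. Since $\Q(\xi_2)=\Q\sbs\Q(\xi_q)$, the two alternatives coincide for our purposes. It is precisely here that the assumption $2\in\pi$ is used.

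\textbf{Sporadic and Lie type groups.} Sporadic groups, the Tits group and the groups of Lie type with exceptional Schur multiplier are handled by Proposition \ref{prop:simple-exceptions}. All remaining groups of Lie type are covered by Proposition \ref{prop: lie type}.

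By the classification, these cases exhaust the simple non-abelian groups, so the hypothesis of Theorem \ref{reduction} holds for $\pi=\{2,q\}$ with $q$ odd, and the theorem follows. The only delicate point is checking that the field condition in Theorem \ref{thm:alternating} (stated as ``some $\mathbb K$'') really lands inside $\Q(\xi_q)$; this is settled by the observation $\Q(\xi_2)=\Q$. There is no substantive new obstacle, since all the heavy lifting has already been done in the preceding results.
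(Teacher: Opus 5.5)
Your proof is correct and matches the paper's argument: Theorem \ref{reduction} together with the Classification, with Theorem \ref{thm:alternating} and Propositions \ref{prop:simple-exceptions} and \ref{prop: lie type} covering the simple groups. Your observation $\Q(\xi_2)=\Q\sbs\Q(\xi_q)$ spells out a step the paper leaves implicit. One small correction to your commentary: $2\in\pi$ is not used only in the alternating case, since the reduction also relies on it, for instance to force $G/E$ to have odd order.
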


\begin{proof} By using the Classification of Finite Simple Groups and Theorem \ref{reduction}, the proof follows directly from Theorem \ref{thm:alternating} and Propositions \ref{prop:simple-exceptions} and \ref{prop: lie type}.
\end{proof}

\begin{cor} Let $G$ be a group of order divisible by a prime number $p$. Then there is some nontrivial $\chi \in \irr G$ of degree coprime to $p$ with $\Q(\chi)\sbs \Q(\xi_p)$ lying in the principal $p$-block of $G$.
 \end{cor}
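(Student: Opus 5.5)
The plan is to derive Corollary B from Theorem \ref{thm:ThmA}, splitting according to whether $p=2$ or $p$ is odd.

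\emph{Case $p$ odd.} Apply Theorem \ref{thm:ThmA} with $q:=p$ and $\pi=\{2,p\}$, taking the prime in $\pi$ dividing $|G|$ to be $p$ itself. This gives ${\bf 1}_G\neq\chi\in\irr{B_p(G)}$ of $\pi'$-degree with $\Q(\chi)\sbs\Q(\xi_p)$. In particular $p\nmid\chi(1)$, so $\chi$ is the required character. The extra condition that $\chi(1)$ is odd is simply discarded.

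\emph{Case $p=2$.} Here $\Q(\xi_2)=\Q$, so we need a nontrivial rational-valued character of odd degree in $B_2(G)$. Apply Theorem \ref{thm:ThmA} with $p=2$ and an auxiliary odd prime $q$. This yields ${\bf 1}_G\neq\chi\in\irr{B_2(G)}$ of odd degree with $\Q(\chi)\sbs\Q(\xi_q)$. To force rationality, I would vary $q$. Choose two distinct odd primes $q_1,q_2$, obtaining characters $\chi_1,\chi_2$. The difficulty is that these characters may differ, so intersecting the fields $\Q(\xi_{q_1})\cap\Q(\xi_{q_2})=\Q$ does not apply directly.

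Instead, fix $q$ to be an odd prime not dividing $|G|$. Then every character value of $G$ lies in $\Q(\xi_{|G|})$, and $\Q(\chi)\sbs\Q(\xi_q)\cap\Q(\xi_{|G|})$. Since $\gcd(q,|G|)=1$, this intersection is $\Q(\xi_{\gcd(q,|G|)})=\Q$. Hence $\chi$ is rational, which is exactly the requirement $\Q(\chi)\sbs\Q(\xi_2)$.

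The only step needing care is this choice of $q$ coprime to $|G|$ together with the standard fact $\Q(\xi_a)\cap\Q(\xi_b)=\Q(\xi_{\gcd(a,b)})$. Such a prime exists because there are infinitely many primes. Everything else is immediate from Theorem \ref{thm:ThmA}.
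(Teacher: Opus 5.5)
Your argument is correct. For odd $p$ it is the paper's argument (Theorem~\ref{thm:ThmA} with $q=p$), but for $p=2$ you take a genuinely different route.

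The paper does not use Theorem~\ref{thm:ThmA} when $p=2$. Instead it takes the nontrivial rational irreducible character $\chi$ of odd degree given by the Navarro--Tiep theorem (every group of even order has one) and shows directly that $\chi\in\irr{B_2(G)}$. Since $\chi^0$ is real-valued, the non-real irreducible $2$-Brauer constituents pair off with their complex conjugates with equal multiplicities. Because $\chi(1)$ is odd, some real constituent $\varphi$ of odd degree must occur. By Fong's lemma, the only real irreducible $2$-Brauer character of odd degree is the trivial one, so $d_{\chi,{\bf 1}}\neq 0$.

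Your trick instead applies Theorem~\ref{thm:ThmA} with an auxiliary odd prime $q\nmid|G|$. Rationality then follows from $\Q(\xi_q)\cap\Q(\xi_{|G|})=\Q$. This is legitimate, since the theorem is stated for every odd prime $q$, and it makes the corollary a purely formal consequence of Theorem~\ref{thm:ThmA} with no further external input.

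The price is that for such $q$, Theorem~\ref{thm:ThmA} with $p=2$ says exactly what you want to prove. All the work is therefore hidden in the reduction and simple-group verification for that $q$. The paper's route is independent of that machinery. It also yields a stronger structural fact: every nontrivial real-valued irreducible character of odd degree lies in the principal $2$-block.
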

\begin{proof}
If $p$ is odd, this is an immediate consequence of Theorem \ref{thm:ThmA}.
 If $p=2$ we know by \cite[Theorem B]{NT08} that a group $G$ of even order contains a nontrivial rational irreducible character $\chi$ of odd degree. 
 We show that the decomposition number $d_{\chi {\bf 1}_{G^0}}\neq 0$, where ${\bf 1}_{G^0}$ denotes the trivial $2$-Brauer character of $G$. This will show that $\chi \in \irr{B_2(G)}$.
 Since $\chi$ has odd degree and it is rational, if we consider its restriction to $G^0$ the $2$-regular elements of $G$, we must find some real irreducible constituent $\varphi$ of odd degree.
 By a theorem of Fong \cite[Theorem 2.30]{Nav98} we have that $\varphi={\bf 1}_{G^0}$.
\end{proof}

\section{Further comments} \label{sec:comments}

In this final section, we contextualize our work in terms of recent research on principal blocks and various primes, and discuss some variations of our main result.

To begin, we note that the requirement that the set $\pi$ contains the prime $2$ is necessary for the statement of Theorem A to hold. For instance, the only irreducible character $\chi$ of the Janko group $\mathsf{J}_4$ of $\{23, 43\}'$-degree satisfying $\mathbb{Q}(\chi) \subseteq \mathbb{Q}(\xi_{23})$ or $\mathbb{Q}(\chi) \subseteq \mathbb{Q}(\xi_{43})$ is the trivial character.

If we focus on any pair of primes $\pi$, and forget about restrictions on fields of values, we believe that the following should hold.

\begin{con}\label{con:twoprimes}
Let $\pi$ be a set containing at least two primes, and let $\ell \in \pi$. If $G$ is a finite group of order divisible by $\ell$, then the principal $\ell$-block of $G$ contains a nontrivial character of $\pi'$-degree.
\end{con}

Notice that Conjecture \ref{con:twoprimes} is a principal block version of the main result of \cite{GSV19}. Proceeding as in the proof of Theorem \ref{reduction}, the verification of this conjecture can be reduced to the case of non-abelian simple groups. The case of the alternating groups has already been treated in Theorem \ref{thm:alternating}, while the sporadic simple groups and the simple groups of Lie type with exceptional Schur multiplier can be verified computationally using GAP \cite{GAP}. Consequently, the main difficulty lies in proving Conjecture \ref{con:twoprimes} for the simple groups of Lie type with non-exceptional Schur multipliers. Using \cite[Lemmas 3.5 and 3.6]{NRS22} it is possible to reduce this proof to classical-type groups with the defining prime not in $\pi$.
The results in \cite[Theorem A]{Bar25}, although motivated by a conjecture of Navarro, Rizo, and Schaeffer Fry \cite[Conjecture A]{NRS22} concerning character intersection properties for principal blocks with respect to two primes, further reduce the proof of Conjecture \ref{con:twoprimes} to types $D$ and ${}^2D$.

It is worth noting that \cite[Conjecture A]{NRS22} does not seem to imply Conjecture \ref{con:twoprimes}. However, as mentioned in the Introduction of \cite{Bar25}, the current strategy to prove \cite[Conjecture A]{NRS22} relies on proving the existence
of a nontrivial character of $\pi'$-degree lying in the principal $\ell$-block for every $\ell\in \pi$ for the finite simple groups of Lie type.

\end{document}